\let\top\relax
\theoremstyle{theorem}
\newtheorem*{MT}{Main Theorem}
\newtheorem{Theorem}{Theorem}
\newtheorem{Lemma}[Theorem]{Lemma}
\newtheorem{Proposition}[Theorem]{Proposition}
\theoremstyle{definition}
\newtheorem{Definition}[Theorem]{Definition}
\DeclareMathOperator{\Pic}{Pic} \DeclareMathOperator{\Spec}{Spec}
\DeclareMathOperator{\Sing}{Sing} \DeclareMathOperator{\Gal}{Gal}
\DeclareMathOperator{\rank}{rank} \DeclareMathOperator{\diag}{diag}
\DeclareMathOperator{\Aut}{Aut}
\DeclareMathOperator{\id}{id}
\DeclareMathOperator{\tr}{tr}
\DeclareMathOperator{\top}{top}
 \DeclareMathOperator{\ord}{ord}
\DeclareMathOperator{\NS}{NS}
\newcommand{\labelA}{%
 \multido{\na=-6+5,\nb=-4+5,\N=1+1}{3}{%
    \rput(\na,-7.6){$F_\N$}
    \rput(7.6,\nb){$G_\N$}
  }
  \multido{\na=-3.2+5.0,\nA=1+1}{3}{%
  \multido{\nb=-5+5.0,\nB=1+1}{3}{%
    \rput(\na,\nb){$E_{\nA\nB}$}
  }
  }
  \multido{\na=-5+5.0,\nA=1+1}{3}{%
  \multido{\nb=-6.6+5.0,\nB=1+1}{3}{%
    \rput(\na,\nb){$E_{\nA\nB}'$}
  }
  }
}
\newcommand{\labelB}{%
 \multido{\na=-6+5,\nb=-4+5,\nc=-5+5,\N=1+1}{3}{%
   \rput(\na,-7.6){$F_\N$}
   \rput(7.6,\nb){$G_\N$}
   \rput(0,\nc){$E_{2\N}'$}
 }
 \multido{\na=-4+10,\nc=-5.2+10.0,\nx=-3.5+10.0,\NA=1+2}{2}{%
 \multido{\nb=-2.6+10.0,\nd=-6+10,\ny=-6.5+10.0,\NB=1+2}{2}{%
   \rput(\na,\nb){$E_{\NA\NB}$}
   \rput(\nc,\nd){$E_{\NA\NB}'$}
   \rput(\nx,\ny){$H_{\NA\NB}$}
 }
 }
 \multido{\n=-5+10,\N=1+2}{2}{%
   \rput(\n,0){$E_{\N2}$}
 }
 \rput(-2,2){$E_{22}$}
}
\begin{document}

\title{The Classification of log Enriques Surfaces of rank 18}

\author{Fei Wang}

\address{
  Fei Wang\\
  Department of Mathematics\\
  National University of Singapore\\
  10 Lower Kent Ridge Road\\
  Singapore 119076
}

\email{matwf@nus.edu.sg}

\subjclass[2000]{Primary 14J28; Secondary 14J17, 14J50}

\keywords{Automorphisms of K3 surfaces, Log Enriques surfaces,
Quotient singularities}

\maketitle

\begin{abstract}
  Log Enriques surface is a generalization of K3 and Enriques
  surface. We will classify all the rational log Enriques
  surfaces of rank 18 by giving concrete models for the realizable
  types of these surfaces.
\end{abstract}

\section{Introduction}\label{sec2.1}

A normal projective surface $Z$ with at worst quotient singularities is called a \emph{logarithmic} (abbr.\ \emph{log}) \emph{Enriques surface} if its canonical Weil divisor $K_Z$ is numerically equivalent to zero, and if its irregularity $\dim H^1(Z, \mathcal O_Z)=0$. By the abundance for surfaces, $K_Z\sim_{\mathbb Q} 0$.

Let $Z$ be a log Enriques surface and define
\begin{equation*}
  I:=I(Z)= \min\{n\in \mathbb Z^+\mid \mathcal O_Z(nK_Z)\simeq \mathcal O_Z\}
\end{equation*}
to be the \emph{canonical index} of $Z$. The \emph{canonical cover} of $Z$ is defined as
\begin{equation*}
  \pi: \bar S:=\Spec_{\mathcal O_Z} \left( \bigoplus_{j=0}^{I-1} \mathcal O_Z(-jK_Z)\right)\to Z.
\end{equation*}
This is a Galois $\mathbb Z/I\mathbb Z$-cover. So $\bar S/(\mathbb Z/I\mathbb Z)=Z$.

Note that a log Enriques surface is irrational if and only if it is a K3 or Enriques surface with at worst Du Val singularities (cf.\ \cite[Proposition~1.3]{zhang1991logarithmic}). More precisely, a log Enriques surface of index one is a K3 surface with at worst Du Val singularities, and a log Enriques surface of index two is an Enriques surface with at worst Du Val singularities or a rational surface. Therefore, the log Enriques surfaces can be viewed as
generalizations of K3 surfaces and Enriques surfaces. More results about the canonical indices are studied in \cite{zhang1991logarithmic} and \cite{zhang1993logarithmic}.

\medskip

If a log Enriques surface $Z$ has Du Val singularities, let $\widetilde Z\to Z$ be  the
partial minimal resolution of all Du Val singularities of $Z$, then
$\widetilde Z$ is again a log Enriques surface of the same canonical
index as $Z$. Therefore, we assume throughout this paper that $Z$ has
no Du Val singularities; otherwise we consider $\widetilde Z$
instead.

By the definition of the canonical cover and the classification result of surfaces, we have the following (cf.\ \cite{zhang1991logarithmic}).

1. $\bar S$ has at worst Du Val singularities, and its canonical divisor $K_{\bar S}$ is linearly equivalent to zero. So $\bar S$ is either an abelian surface or a projective K3 surface with at worst Du Val singularities.

2. $\pi: \bar S\to Z$ is a finite, cyclic Galois cover of degree $I=I(Z)$, and it is \'{e}tale over $Z\backslash\Sing Z$.

3. $\Gal(\bar S/Z)\simeq \mathbb Z/I\mathbb Z$ acts faithfully on $H^0(\mathcal O_{\bar S}(K_{\bar S}))$. In other words, there is a generator $g$ of $\Gal(\bar S/Z)$ such that $g^*\omega_{\bar S} = \zeta_I \omega_{\bar S}$, where $\zeta_I$ is the $I$th primitive root of unity and $\omega_{\bar S}$ is a nowhere vanishing regular $2$-form on $\bar S$.

\medskip

Suppose $\Sing \bar S\neq \emptyset$. Let $\nu: S\to \bar S$ be the
minimal resolution of $\bar S$, and $\Delta_S$  the exceptional
divisor of $\nu$. Then $\Delta_S$ is a disconnected sum of divisors
of Dynkin's type:
\begin{equation*}
  \left(\oplus A_\alpha\right) \oplus \left(\oplus D_\beta\right) \oplus \left(\oplus E_\gamma\right)
\end{equation*}

Note that $S$ is a K3 surface. The Chern map $c_1: \Pic(S)\to
H^2(S,\mathbb Z)$ is injective. So $\Pic(S)$ is mapped
isomorphically onto the Neron-Severi group $\NS(S)$. We can therefore
define the \emph{rank} of $\Delta_S$ to be the rank of the sublattice of
the N\'{e}ron Severi lattice $\NS(S)\simeq\Pic(S)$ generated by the
irreducible components of $\Delta_S$. In other words,
\begin{equation*}
  \rank \Delta_S= \sum \alpha+\sum \beta+ \sum \gamma.
\end{equation*}
Moreover, let $\rho(S):=\rank \Pic(S)$ be the Picard number of $S$,
then
\begin{equation*}
  \rank \Delta_S \leq \rho(S)-1 \leq 20-1=19.
\end{equation*}
Since $S$ is uniquely determined up to isomorphism, by abuse of language we also say $Z$ is of type $(\oplus A_{\alpha})\oplus (\oplus D_\beta)\oplus (\oplus E_\gamma)$, and call $\rank \Delta_S$ the \emph{rank} of $Z$.

\medskip

A rational log Enriques surface $Z$ is called \emph{extremal} if
it is of rank 19, the maximal possible value $19$. The extremal log Enriques surfaces are
completely classified in \cite{oguiso1999complete}. In \cite{oguiso1998extremal}, the isomorphism classes of rational log Enriques surfaces of type $A_{18}$ and $D_{18}$ are determined. In this paper, we are going to classify all the rational log Enriques surfaces of rank 18 by proving the following theorem.

\begin{MT}\label{theorem2}
  Let $Z$ be a rational log Enriques surfaces of rank $18$ without Du Val singularities. Let $\bar S\to Z$ be the canonical cover, and $S\to \bar S$ the minimal resolution with exceptional divisor $\Delta_S$. Then we have the following assertions.
  \begin{enumerate}[\quad \rm 1)]
    \item The canonical index $I(Z)=2$, $3$ or $4$.
    \item If $I(Z)=2$, then $(S,g)\simeq (S_2,g_2)$, and $\Delta_S$ is of one of the following $5$ types:
    \begin{equation*}
      A_1\oplus A_{17},\quad A_3\oplus A_{15},\quad A_5\oplus A_{13},\quad A_7\oplus A_{11},\quad A_9\oplus A_9.
    \end{equation*}
    Moreover, all of them are realizable.
    \item If $I(Z)=3$, then $(S,g)\simeq (S_3,g_3)$,  and $\Delta_S$ is of one of the $48$ possible types in Table~\ref{table2.1}, and  from which $40$ types have been realized.
    \item If $I(Z)=4$, then $(S,g^2)\simeq (S_2,g_2)$, and $\Delta_S$ is of one of the following 3 types:
    \begin{equation*}
      A_1\oplus A_{17},\quad A_5\oplus A_{13},\quad A_9\oplus A_9.
    \end{equation*}
    Moreover, all of them are realizable.
    \item For each of the possible cases in (2) and (3), every irreducible curve in $\Delta_S$ is $g$-stable, and the action of $g$ on $\Delta$ is uniquely determined, which are given in Table~\ref{table2.2} and
    \ref{table2.1}, respectively.
  \end{enumerate}
  Here $(S_2,g_2)$ \emph{(}Definition \ref{d2.6}\emph{)} and $(S_3,g_3)$ \emph{(}Definition \ref{d2.3}\emph{)} are the Shioda-Inose's pairs of discriminants $4$ and $3$ respectively.
\end{MT}

\thanks{\textsc{Acknowledgement}. The author would like to thank for Prof D.-Q.\ Zhang for his kind guidance of the paper, and thank the referee for the valuable commands.}

\section{Preliminaries}\label{sec2.2}

\begin{Definition}\label{d2.1}
  Let $Z$ be a normal projective surface defined over the complex number field $\mathbb C$. It is called a \emph{log Enriques surface} of \emph{canonical index} $I$ if
  \begin{enumerate}[\quad 1)]\itemsep=1mm
    \item $Z$ has at worst quotient singularities, and
    \item $IK_Z$ is linearly equivalent to zero for the minimum positive integer $I$, and
    \item the irregularity $q(Z):=\dim H^1(Z,\mathcal O_Z)=0$.
  \end{enumerate}
\end{Definition}

We will use the following notations in Section~\ref{sec2.3}--\ref{sec2.4}.

\begin{enumerate}[\quad 1.]\itemsep=1mm
  \item For each $I\in \mathbb Z^+$, $\zeta_I=\exp(2\pi\sqrt{-1}/I)$, a primitive $I$th root of unity.
  \item Let $X$ be a variety, and $G$ an automorphism group on $X$. For each $g\in X$, denote the fixed locus by $X^g=\{x\in X\mid g(x)=x\}$. Set $X^{[G]}= \bigcup_{g\in G\backslash \{\id\}} X^g$.
  \item Let $S$ be a surface and $g$ an automorphism on $S$. A curve $C$ on $S$ is called \emph{$g$-stable} if $g(C)=C$, and it is called \emph{$g$-fixed} if $g(x)=x$ for every $x\in C$. A point $x\in S$ is an \emph{isolated $g$-fixed point} if $g(x)=x$ and it is not contained in any $g$-fixed curve.
\end{enumerate}

\section[Shioda-Inose's Pairs]{Log Enriques Surfaces from Shioda-Inose's Pairs}\label{sec2.3}

In this section, we assume that $Z$ is a rational log Enriques surface of rank $18$ and
canonical index $I$ without Du Val singularities. Let $\pi: \bar S\to Z$ be the  canonical cover
of $Z$, and $\nu: S\to \bar S$ the minimal resolution of $\bar S$ with exceptional divisor $\Delta_S$. Then
\begin{equation*}
  20\geq \rho(S)\geq \rank \Delta_S+1=19.
\end{equation*}
Recall that $S$ is a K3 surface. Let $T_S$ denote the transcendental lattice of $S$, i.e., the orthogonal complement of $\Pic(S)$ in
$H^2(S,\mathbb Z)$. Then
\begin{equation*}
  \rank T_S= \dim H^2(S,\mathbb
Z)-\rho(S)=22- \rho(S)=2\ \textrm{or}\ 3.
\end{equation*}

\medskip
Let $g$ be the automorphism on $S$ induced by a generator of
$\Gal(\bar S/Z)$, and $\omega_S$ a nowhere vanishing holomorphic $2$-form on
$S$. Then $g^*\omega_S= \zeta_I\omega_S$. Note that $\omega_S \in
T_S \otimes \mathbb C$. So $\zeta_I$ is an eigenvalue of $g^*$
acting on $T_S$. Therefore, $\varphi(I)\leq \rank T_S\leq 3$, where
$\varphi$ is Euler's phi function. It follows that

\begin{Lemma}\label{l2.2}
  The canonical index $I(Z)=2,3,4$ or $6$.
\end{Lemma}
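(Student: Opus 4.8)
The plan is short, since the substance is the inequality $\varphi(I)\le \rank T_S\le 3$ recorded just above together with a little elementary number theory. First I would spell out why "$\zeta_I$ is an eigenvalue of $g^*$ on $T_S$" forces $\varphi(I)\le \rank T_S$. Since $g$ is an automorphism of $S$, the isometry $g^*$ of $H^2(S,\mathbb Z)$ preserves $\Pic(S)$, hence also its orthogonal complement $T_S$; thus $g^*$ restricts to an invertible integral endomorphism $M$ of the free $\mathbb Z$-module $T_S$, and its characteristic polynomial $\chi_M$ lies in $\mathbb Z[x]$. By hypothesis $g^*\omega_S=\zeta_I\omega_S$ with $0\ne \omega_S\in T_S\otimes\mathbb C$, so $\zeta_I$ is a root of $\chi_M$. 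As the minimal polynomial of $\zeta_I$ over $\mathbb Q$ is the $I$th cyclotomic polynomial $\Phi_I$, we conclude $\Phi_I \mid \chi_M$ in $\mathbb Q[x]$, whence $\varphi(I)=\deg \Phi_I\le \deg\chi_M=\rank T_S\le 3$.

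Next I would solve $\varphi(I)\le 3$ over the positive integers. For every $I\ge 3$ the value $\varphi(I)$ is even, and $3$ is not in the image of $\varphi$, so necessarily $\varphi(I)\in\{1,2\}$. The solutions of $\varphi(I)=1$ are $I=1,2$, and those of $\varphi(I)=2$ are $I=3,4,6$; hence $I(Z)\in\{1,2,3,4,6\}$. Finally I would discard $I=1$: then the canonical cover is trivial and $Z=\bar S$ is, by property (1) of the canonical cover, a K3 surface with at worst Du Val singularities; having assumed $Z$ has no Du Val singularity, $Z$ would be a smooth projective K3 surface, in particular irrational, contradicting the standing hypothesis that $Z$ is rational. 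Therefore $I(Z)\in\{2,3,4,6\}$.

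There is no real obstacle here. The two points worth stating carefully are that $g^*$ genuinely preserves the transcendental lattice $T_S$ (so that it acts there by an integer matrix), and that one should pass to the minimal polynomial $\Phi_I$ of $\zeta_I$ over $\mathbb Q$ in order to obtain the sharp bound $\varphi(I)\le \rank T_S$, rather than the weaker "$\le 2$" that merely noting $\zeta_I$ and its complex conjugate are both eigenvalues would yield.
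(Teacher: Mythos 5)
Your proposal is correct and follows the same route as the paper: the paper derives the lemma exactly from $\varphi(I)\le \rank T_S\le 3$ (stated in the paragraph preceding it), which you justify carefully via the integrality of $g^*$ on $T_S$ and the cyclotomic polynomial, and then solve $\varphi(I)\le 3$. Your explicit exclusion of $I=1$ via rationality of $Z$ is a detail the paper leaves implicit, but it matches the paper's standing hypotheses, so there is nothing to object to.
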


We have indicated that all the realizable rational log Enriques surfaces listed in Main Theorem can be constructed from the Shioda-Inose's pairs $(S_2,g_2)$ or $(S_3,g_3)$ (cf.~\cite{shioda-singular}). Precisely, if $I(Z)=2$, then $(S,g)\simeq (S_2,g_2)$; if $I(Z)=3$,
then $(S,g)\simeq (S_3,g_3)$; if $I(Z)=4$, then $(S,g^2)\simeq
(S_2,g_2)$; we will also show that $I\neq 6$.

\begin{Definition}\label{d2.3}
  Let $\zeta_3:=\exp(2\pi\sqrt{-1}/3)$, and $E_{\zeta_3}:= \mathbb C/(\mathbb Z+\mathbb Z\zeta_3)$ the elliptic curve of period $\zeta_3$. Let $\bar S_3:= E_{\zeta_3}^2 / \langle \diag(\zeta_3,\zeta_3^2) \rangle$ be the quotient surface, and $S_3\to \bar S_3$ the minimal resolution of $\bar S_3$. Let $g_3$ be the automorphism of $S_3$ induced by the action $\diag(\zeta_3,1)$ on $E_{\zeta_3}^2$. Then $(S_3,g_3)$ is called the \emph{Shioda-Inose's pair of discriminant $3$}.
\end{Definition}

\begin{figure}[h!]
\begin{center}\psset{unit=0.5}
\begin{pspicture*}(-7,-8)(8,7)

\labelA

  \psline[linewidth=2pt](-7,-4)(7,-4) 
  \psline[linewidth=2pt](-7,1)(7,1) 
  \psline[linewidth=2pt](-7,6)(7,6) 

  \psline[border=2mm,linewidth=2pt](-6,-7)(-6,7) 
  \psline[border=2mm,linewidth=2pt](-1,-7)(-1,7) 
  \psline[border=2mm,linewidth=2pt](4,-7)(4,7) 

  \psline(-7,4)(-3,4) 
  \psline(-2,4)(2,4) 
  \psline(3,4)(7,4) 

  \psline(-7,-1)(-3,-1) 
  \psline(-2,-1)(2,-1) 
  \psline(3,-1)(7,-1) 

  \psline(-7,-6)(-3,-6) 
  \psline(-2,-6)(2,-6) 
  \psline(3,-6)(7,-6) 

  \psline(-4,-7)(-4,-3) 
  \psline(-4,-2)(-4,2) 
  \psline(-4,3)(-4,7) 

  \psline(1,-7)(1,-3) 
  \psline(1,-2)(1,2) 
  \psline(1,3)(1,7) 

  \psline(6,-7)(6,-3) 
  \psline(6,-2)(6,2) 
  \psline(6,3)(6,7) 
\end{pspicture*}
\end{center}
\caption{$(S_3,g_3)$}\label{fig2.1}
\end{figure}

It is proved in \cite{oguiso1996most} and \cite{oguiso1999complete} that

\begin{Proposition}\label{p2.4}
  Let $(S_3,g_3)$ be the Shioda-Inose's pair of discriminant $3$. Then
  \begin{enumerate}[\quad \rm 1)]\itemsep=1mm
    \item
    $S_3$ contains $24$ rational curves: $F_1,F_2,F_3$ coming from $(E_{\zeta_3})^{\zeta_3}\times E_{\zeta_3}$; $G_1,G_2,G_3$ coming from $E_{\zeta_3}\times (E_{\zeta_3})^{\zeta_3}$; and $E_{ij}, E_{ij}'$ $(i,j=1,2,3)$ the exceptional curves arising from the $9$ Du Val singular points of $\bar S_3$ {\rm(}Figure.~\ref{fig2.1}\emph{)};
    \item
    $g_3^*\omega_{S_3}= \zeta_3\omega_3$, where $\omega_{S_3}$ is a nowhere vanishing holomorphic $2$-form on $S_3$, and $g_3^*|_{\Pic(S_3)}=\id$; so each of the $24$ curves is $g_3$-stable;
    \item
    $S_3^{g_3}= (\coprod_{i=1}^3 F_i) \coprod (\coprod_{j=1}^3 G_j) \coprod (\coprod_{i,j=1}^3 \{P_{ij}\})$, where $\{P_{ij}\}=E_{ij}\cap E_{ij}'$;
    \item
    $g_3\circ \varphi= \varphi\circ g_3$ for all $\varphi\in \Aut(S_3)$.
  \end{enumerate}
\end{Proposition}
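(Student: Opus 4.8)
\emph{Proof proposal.} I would argue throughout with the explicit model of Definition~\ref{d2.3}: write $A:=E_{\zeta_3}^{2}$, $\sigma:=\diag(\zeta_3,\zeta_3^{2})$ and $\tau:=\diag(\zeta_3,1)$, so that $\bar S_3=A/\langle\sigma\rangle$ and $g_3$ is induced by $\tau$; the two ingredients beyond explicit geometry will be the topological Lefschetz fixed-point formula (to control the action on $\Pic$) and the global Torelli theorem for K3 surfaces (for centrality). For assertion (1): the automorphism $\sigma$ has exactly the nine fixed points $\{p_i\}\times\{p_j\}$ on $A$, where $\{p_1,p_2,p_3\}=\ker\bigl(\zeta_3-1\colon E_{\zeta_3}\to E_{\zeta_3}\bigr)$ is a group of order $N(\zeta_3-1)=3$ (and $\ker(\zeta_3^{2}-1)=\ker(\zeta_3-1)$, since $\zeta_3+1$ is a unit in $\mathbb Z[\zeta_3]$); near each of them $\sigma$ has the form $\tfrac13(1,2)$, an $A_2$ point, so $\bar S_3$ has exactly nine singular points, all $A_2$, and $\nu$ inserts nine $(-2)$-chains $E_{ij}-E_{ij}'$, i.e.\ $18$ rational curves. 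The curves $\{p_i\}\times E_{\zeta_3}$ and $E_{\zeta_3}\times\{p_j\}$ are $\sigma$-stable and, an elliptic curve modulo an order-$3$ automorphism with fixed points being $\mathbb P^{1}$ by Riemann--Hurwitz, they descend to smooth rational curves whose strict transforms are the $F_i$, $G_j$; following the two analytic branches at each $(p_i,p_j)$ through $\nu$ one obtains the chains $F_i-E_{ij}-E_{ij}'-G_j$ of Figure~\ref{fig2.1}, hence $24$ rational curves in all. Finally $S_3$ is a K3 surface: $\sigma^{*}(dz_1\wedge dz_2)=\zeta_3\zeta_3^{2}(dz_1\wedge dz_2)$ is $\sigma$-invariant and descends, through the crepant resolution $\nu$, to a nowhere-vanishing holomorphic $2$-form, whereas $\sigma^{*}$ fixes nothing in $H^{0}(\Omega^{1}_{A})$; thus $K_{S_3}\sim 0$, $q(S_3)=0$ and $p_g(S_3)=1$.

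For assertions (2) and (3): since $\tau$ and $\sigma$ are both diagonal they commute, so $\tau$ descends to $\bar\tau\in\Aut(\bar S_3)$ and, the minimal resolution being canonical, lifts to $g_3\in\Aut(S_3)$; from $\tau^{*}(dz_1\wedge dz_2)=\zeta_3(dz_1\wedge dz_2)$ we get $g_3^{*}\omega_{S_3}=\zeta_3\omega_{S_3}$, whence $g_3$ has order exactly $3$. To identify $S_3^{g_3}$ I would first compute $\bar S_3^{\bar\tau}$: a $\sigma$-orbit $\{(x,y),(\zeta_3x,\zeta_3^{2}y),(\zeta_3^{2}x,\zeta_3y)\}$ is $\bar\tau$-fixed iff $(\zeta_3x,y)$ lies in it, and a short casework shows this occurs exactly when $x\in\ker(\zeta_3-1)$ (giving the pointwise-fixed curves $\bar F_i$), when $y\in\ker(\zeta_3-1)$ (the $\bar G_j$), or when both (the nine singular points). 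Over each $(p_i,p_j)$ one resolves the $A_2$ point torically and notes that $\bar\tau$ is the image of $\tau\in(\mathbb C^{*})^{2}$, hence a nontrivial order-$3$ element of the torus acting on that local resolution: it stabilises each exceptional $\mathbb P^{1}$, and a direct check (that $\bar\tau$ lies in neither one-parameter subgroup attached to the exceptional rays) shows it acts nontrivially on both; so on each exceptional curve it fixes exactly its two torus-fixed points, which over $(p_i,p_j)$ are $E_{ij}\cap E_{ij}'$, $E_{ij}\cap F_i$ and $E_{ij}'\cap G_j$ — and the last two already lie on the fixed curves $F_i$, $G_j$. Hence the only new isolated fixed point over $(p_i,p_j)$ is $P_{ij}:=E_{ij}\cap E_{ij}'$, and, the $F_i$ and $G_j$ being pairwise disjoint in $S_3$, one gets $S_3^{g_3}=\bigl(\coprod_i F_i\bigr)\coprod\bigl(\coprod_j G_j\bigr)\coprod\bigl(\coprod_{i,j}\{P_{ij}\}\bigr)$, which is (3). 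The same analysis shows $g_3$ stabilises each of the $24$ curves — for the two curves of one chain this uses that $g_3$ preserves their union and cannot interchange them, an order-$3$ permutation of a $2$-element set being trivial. I expect this toric analysis at the nine $A_2$ points to be the one genuinely delicate point: one must exclude $g_3$ fixing an entire exceptional component, a possibility that is a priori consistent with $g_3^{*}\omega_{S_3}=\zeta_3\omega_{S_3}$ and with the Lefschetz count below.

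The remaining claim of (2), that $g_3^{*}|_{\Pic(S_3)}=\id$, then follows from the Lefschetz fixed-point formula: $\chi(S_3^{g_3})=2+\tr\bigl(g_3^{*}\mid H^{2}(S_3,\mathbb Q)\bigr)$, and the left side equals $2\cdot 3+2\cdot 3+9=21$, so $\tr(g_3^{*}\mid H^{2})=19$. Since $\omega_{S_3}\in T_{S_3}\otimes\mathbb C$ is a $\zeta_3$-eigenvector of the finite-order isometry $g_3^{*}|_{T_{S_3}}$ and $\rank T_{S_3}\in\{2,3\}$, the characteristic polynomial of $g_3^{*}|_{T_{S_3}}$ is $\Phi_3$ or $\Phi_3\Phi_1$, so $\tr(g_3^{*}\mid T_{S_3})\in\{-1,0\}$; therefore $\tr\bigl(g_3^{*}\mid\Pic(S_3)\otimes\mathbb Q\bigr)=\rank\Pic(S_3)$, and an operator of finite order whose trace equals the dimension of the space is the identity. (This also reproves the $g_3$-stability of the $24$ curves, each being the unique effective divisor in its class.)

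For assertion (4): let $\varphi\in\Aut(S_3)$ and put $h:=g_3\varphi g_3^{-1}\varphi^{-1}$, and write $\varphi^{*}\omega_{S_3}=a\,\omega_{S_3}$. The scalars by which $g_3^{*}$ and $\varphi^{*}$ act on the line $H^{0}(\Omega^{2}_{S_3})$ commute and cancel in $h^{*}$, so $h^{*}\omega_{S_3}=\omega_{S_3}$: $h$ is symplectic. On $\Pic(S_3)$, where $g_3^{*}$ is trivial by the previous step, $h^{*}|_{\Pic(S_3)}=(\varphi^{*})^{-1}\varphi^{*}=\id$. A symplectic automorphism acts trivially on the transcendental lattice — standard, since $T_{S_3}\otimes\mathbb Q$ is an irreducible Hodge structure and $\omega_{S_3}$ is fixed — so $h^{*}$ is trivial on the finite-index subgroup $\Pic(S_3)\oplus T_{S_3}$ of the torsion-free lattice $H^{2}(S_3,\mathbb Z)$, hence trivial on all of $H^{2}(S_3,\mathbb Z)$. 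Since $\Aut(S_3)\to O\bigl(H^{2}(S_3,\mathbb Z)\bigr)$ is faithful (global Torelli), $h=\id$, i.e.\ $g_3$ commutes with $\varphi$.
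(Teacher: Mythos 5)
Your proposal is correct, but it cannot be compared line-by-line with the paper, because the paper offers no proof of Proposition~\ref{p2.4} at all: it is quoted from \cite{oguiso1996most} and \cite{oguiso1999complete}. What you have written is a self-contained verification along the standard lines of those references, and all the main steps check out: the nine $\sigma$-fixed points of type $\tfrac13(1,2)$, the identification of $\bar S_3^{\bar\tau}$ by the orbit casework, the Lefschetz count $\chi_{\top}(S_3^{g_3})=21$ forcing $\tr(g_3^*|_{H^2})=19$, and the commutator argument via global Torelli for assertion~(4). The point you yourself flag as delicate --- that $g_3$ fixes no exceptional component over a point $(p_i,p_j)$ --- does work and can be made completely explicit without toric language: on the $A_2$ singularity $\{uv=w^3\}$ with $u=z_1^3$, $v=z_2^3$, $w=z_1z_2$, the map $\bar\tau$ is $(u,v,w)\mapsto(u,v,\zeta_3 w)$, and in each affine chart of the minimal resolution it multiplies the fibre coordinate of each exceptional $\mathbb P^1$ by a nontrivial cube root of unity, so each of $E_{ij},E_{ij}'$ carries exactly two fixed points, namely the three chain nodes $E_{ij}\cap F_i$, $E_{ij}\cap E_{ij}'$, $E_{ij}'\cap G_j$ in total.

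One small unjustified step: you assert $\rank T_{S_3}\in\{2,3\}$. For the surface $S$ attached to a rank-$18$ log Enriques surface this is derived in Section~\ref{sec2.3} from $\rank\Delta_S=18$, but here you are proving a statement about $S_3$ itself, so it amounts to $\rho(S_3)\ge 19$, which needs an argument (e.g.\ $\rho(E_{\zeta_3}^2)=4$ by CM, whence $\rho(S_3)=2+18=20$). The gap is harmless because your own Lefschetz computation already makes the detour unnecessary: $\tr(g_3^*|_{H^2})=19$ on a $22$-dimensional space whose $g_3^*$-eigenvalues are cube roots of unity occurring in conjugate pairs forces exactly one pair $\{\zeta_3,\bar\zeta_3\}$ and twenty eigenvalues $1$; since $\omega_{S_3}$ and $\bar\omega_{S_3}$ in $T_{S_3}\otimes\mathbb C$ already realize that pair, $g_3^*|_{\Pic(S_3)\otimes\mathbb C}$ has all eigenvalues equal to $1$ and, being of finite order hence semisimple, is the identity. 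With that replacement the proof is complete.
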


\begin{Proposition}\label{p2.5}
  Let $(S,g)$ be a pair of a smooth K3 surface $S$ and an automorphism of $g$ on $S$. Assume that
  \begin{enumerate}[\quad \rm 1)]\itemsep=1mm
    \item $g^3=\id$, the identity on $S$;
    \item $g^*\omega_S= \zeta_3 \omega_S$, where $\omega_S$ is a nowhere vanishing holomorphic $2$-form on $S$;
    \item $S^g$ consists of only rational curves and isolated points;
    \item $S^g$ contains at least $6$ rational curves.
  \end{enumerate}
Then $(S,g)\simeq (S_3,g_3)$. Moreover, $S^g$ consists of exactly $6$ rational curves and $9$ isolated points.
\end{Proposition}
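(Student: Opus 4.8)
The plan is to determine the combinatorial type of $S^g$ from the topological and holomorphic Lefschetz fixed point formulas, conclude that $S$ is a singular K3 surface, and then identify $(S,g)$ with $(S_3,g_3)$ by the rigidity of singular K3 surfaces. First I record the local structure of $S^g$. Since $g^*\omega_S=\zeta_3\omega_S$ we have $g\neq\id$, so $g$ has order exactly $3$ and, being of finite order on a smooth surface, $S^g$ is smooth. At a fixed point $g$ acts on the tangent plane by a pair of cube roots of unity $(\zeta_3^a,\zeta_3^b)$, and the action on $\omega_S$ forces $a+b\equiv 1\pmod 3$; thus a fixed curve $C$ has type $(0,1)$ (so $g$ acts on $N_{C/S}$ by $\zeta_3$) while an isolated fixed point has type $(2,2)$. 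Two fixed curves cannot meet, since at an intersection point $dg$ would be the identity and hence $g=\id$. By hypothesis $(3)$ every fixed curve is a smooth rational curve, hence a $(-2)$-curve, so $S^g=R_1\sqcup\cdots\sqcup R_k\sqcup\{p_1,\dots,p_N\}$ with the $R_i$ pairwise disjoint $(-2)$-curves.

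Next I apply the two Lefschetz formulas. For $x\in H^2(S,\mathbb Z)^g$ one has $\langle x,\omega_S\rangle=\langle g^*x,g^*\omega_S\rangle=\zeta_3\langle x,\omega_S\rangle$, so the invariant lattice lies in $\NS(S)$ and has some rank $m\le\rho(S)\le 20$; since $g^*$ is an integral matrix of order $3$, it has on $H^2$ the eigenvalue $1$ with multiplicity $m$ and $\zeta_3,\zeta_3^2$ each with multiplicity $(22-m)/2$ (the latter $\geq 1$ as $\omega_S$ witnesses). The topological Lefschetz formula then gives $\chi(S^g)=N+2k=(3m-18)/2$. The holomorphic Lefschetz fixed point formula, whose left-hand side is $1+\overline{\zeta_3}$, with contribution $(1-\zeta_3^2)^{-2}$ from each $p_i$ and $(1-\zeta_3)^{-1}+\tfrac13 R_i^2$ from each $R_i$ (using $\zeta_3/(1-\zeta_3)^2=-\tfrac13$), yields on comparing imaginary parts $N-k=3$. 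Substituting, $m=2k+8$; with $m\le 20$ this forces $k\le 6$, and hypothesis $(4)$ then gives $k=6$, $N=9$, $m=20$. In particular $\rho(S)=20$, the invariant lattice coincides with $\NS(S)$ (equal rank, both primitive), and $g^*|_{\NS(S)}=\id$.

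Since $\rho(S)=20$, $S$ is a singular K3 surface with transcendental lattice $T_S=\NS(S)^{\perp}$, which is even, positive definite of rank $2$, and carries $g^*$ with eigenvalues $\zeta_3,\zeta_3^2$; hence $T_S$ is a rank-one $\mathbb Z[\zeta_3]$-module and its Gram matrix is $\left(\begin{smallmatrix}2n&-n\\-n&2n\end{smallmatrix}\right)$ for some $n\ge 1$, with discriminant group $\mathbb Z/n\oplus\mathbb Z/3n$. As $g^*=\id$ on $\NS(S)$ it acts trivially on $\NS(S)^*/\NS(S)$, hence on the anti-isometric group $T_S^*/T_S$; since $g^*-\id$ is injective on $T_S$ with determinant $(\zeta_3-1)(\zeta_3^2-1)=3$, this forces $3\,T_S^*\subseteq T_S$, i.e.\ $T_S^*/T_S$ is annihilated by $3$, so $n=1$ and $T_S\cong A_2$. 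Now $(S_3,g_3)$ itself satisfies hypotheses $(1)$--$(4)$ by Proposition~\ref{p2.4}(2)--(3), so the same argument gives $T_{S_3}\cong A_2$; hence $S\cong S_3$ by Shioda--Inose's classification of singular K3 surfaces. Transporting $g$ along such an isomorphism, both $g$ and $g_3$ restrict to the identity on $\NS(S_3)$ and to the unique order-$3$ isometry of $T_{S_3}\cong A_2$ sending $\omega_{S_3}$ to $\zeta_3\omega_{S_3}$, so they induce the same automorphism of $H^2(S_3,\mathbb Z)$; since $\Aut(S_3)\hookrightarrow\mathrm{O}(H^2(S_3,\mathbb Z))$, this transport of $g$ equals $g_3$, i.e.\ $(S,g)\simeq(S_3,g_3)$, and $S^g$ has exactly $k=6$ rational curves and $N=9$ isolated points by the count above. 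The main obstacle is the rigidity step: using triviality of $g^*$ on $\NS(S)$ to pin down $T_S$ as exactly $A_2$, after which the classification of singular K3 surfaces and injectivity of $\Aut$ on $H^2$ finish the proof; the Lefschetz bookkeeping is routine but requires care with the normalization of the holomorphic Lefschetz fixed point formula.
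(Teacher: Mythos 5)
Your proof is correct. Note that the paper does not actually prove Proposition~\ref{p2.5}: it is quoted from \cite{oguiso1996most} and \cite{oguiso1999complete}, so there is no in-paper argument to compare against line by line. What you have written is essentially the standard Oguiso--Zhang argument, and it is consistent with the techniques the paper does use elsewhere: your holomorphic Lefschetz computation giving $N-k=3$ (isolated points minus fixed curves) is exactly part~3) of the ``Three Go'' Lemma~\ref{l2.10}, proved the same way the paper proves Lemma~\ref{l2.14}(3) (beware that the statement of Lemma~\ref{l2.10}(3) in the paper has $M$ and $N$ swapped relative to its own usage in Proposition~\ref{p2.11}, Step~5 --- your sign convention is the correct one); the topological Lefschetz count forcing $m=2k+8\le 20$ and hence $k=6$, $N=9$, $\rho(S)=20$ mirrors Step~1 of Proposition~\ref{p2.16}; and the endgame (trivial $g^*$-action on $\NS(S)$, the discriminant-form argument pinning $T_S$ down to $A_2$, Shioda--Inose uniqueness, and injectivity of $\Aut(S)\to \mathrm{O}(H^2(S,\mathbb Z))$ to identify $g$ with $g_3$) is the content of the cited references. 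The only steps worth double-checking in a write-up are the ones you already flag: the normalization of the curve contribution $b(C)$ in the holomorphic Lefschetz formula (your value $(-1+i\sqrt 3)/6$ per fixed $(-2)$-curve is right), and the $g^*$-equivariance of the anti-isometry $\NS(S)^*/\NS(S)\simeq T_S^*/T_S$, which is what lets you conclude $3T_S^*\subseteq T_S$ and hence $n=1$.
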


\begin{Definition}\label{d2.6}
  Let $E_{\zeta_4}:= \mathbb C/(\mathbb Z+\mathbb Z\sqrt{-1})$ be the elliptic curve of period $\zeta_4=\sqrt{-1}$. Let $\bar S_2:= E_{\zeta_4}^2/\langle\diag(\zeta_4,\zeta_4^3)\rangle$ be the quotient surface and $S_2\to \bar S_2$ the minimal resolution of $\bar S_2$. Let $g_2$ be the involution of $S_2$ induced by the action $\diag(-1,1)$ on $E_{\zeta_4}^2$. Then $(S_2,g_2)$ is called the \emph{Shioda-Inose's pair of discriminant $4$}.
\end{Definition}

\begin{figure}[h!]
\begin{center}\psset{unit=0.5}
\begin{pspicture*}(-7,-8)(8,8)

  \psline[linewidth=2pt](-7,-4)(7,-4) 
  \psline[linewidth=2pt](-7,1)(7,1) 
  \psline[linewidth=2pt](-7,6)(7,6) 

  \psline[border=2mm,linewidth=2pt](-6,-7)(-6,7) 
  \psline[border=2mm,linewidth=2pt](-1,-7)(-1,7) 
  \psline[border=2mm,linewidth=2pt](4,-7)(4,7) 

  \psline(-4,7)(-3,4)  
  \psline(-7,4)(-4,3)  
  \psline[linewidth=2pt](-5,3)(-3,5)  

  \psline(6,7)(7,4)  
  \psline(3,4)(6,3)  
  \psline[linewidth=2pt](5,3)(7,5)  

  \psline(-4,-3)(-3,-6)  
  \psline(-7,-6)(-4,-7)  
  \psline[linewidth=2pt](-5,-7)(-3,-5)  

  \psline(6,-3)(7,-6)  
  \psline(3,-6)(6,-7)  
  \psline[linewidth=2pt](5,-7)(7,-5)  

  \psline(-2,4)(1,7) 
  \psline(-2,-1)(1,2) 
  \psline(-3,0)(0,3)  
  \psline(-7,-1)(-4,2)  
  \psline(3,-1)(6,2)  
  \psline(-2,-6)(1,-3) 

  \labelB
\end{pspicture*}
\end{center}\caption{$(S_2,g_2)$}\label{fig2.2}
\end{figure}

It is also proved in \cite{oguiso1996most} and \cite{oguiso1999complete} that

\begin{Proposition}\label{p2.7}
  Let $(S_2,g_2)$ be the Shioda-Inose's pair of discriminant $4$. Then
  \begin{enumerate}[\quad \rm 1)]\itemsep=1mm
  \item
  $S_2$ contains $24$ rational curves: $F_1,F_2,F_3$ coming from $(E_{\zeta_4})^{[\langle \zeta_4\rangle]} \times E_{\zeta_4}$; $G_1,G_2,G_3$ coming from $E_{\zeta_4}\times (E_{\zeta_4})^{[\langle \zeta_4\rangle]}$; and $E_{ij}'+ H_{ij}+E_{ij}$, $i,j\in \{1,3\}$, the exceptional curves arising from the $4$ Du Val singular points of Dynkin type $A_3$; and $E_{12}, E_{22}, E_{32}, E_{21}', E_{22}', E_{23}'$, the exceptional curves arising from the $6$ Du Val singular points of Dynkin type $A_1$ {\rm(}Figure.~\ref{fig2.2}{\rm)};
  \item
  $g_2^*\omega_{S_2} = -\omega_{S_2}$, where $\omega_{S_2}$ is a nowhere vanishing holomorphic $2$-form on $S_2$, and $g_2^*|_{\Pic(S)}=\id$; so each of the $24$ curves is $g_2$-stable;
  \item
  $S_2^{g_2}= (\coprod_{i=1}^3 F_i) \coprod (\coprod_{j=1}^3 G_j) \coprod (\coprod_{i,j\in \{1,3\}} H_{ij})$;
  \item
  $g_2\circ \varphi= \varphi\circ g_2$ for all $\varphi\in \Aut(S_2)$.
\end{enumerate}
\end{Proposition}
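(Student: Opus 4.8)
The plan is to carry out, for the order-$4$ group $G:=\langle\diag(\zeta_4,\zeta_4^3)\rangle$ acting on $E_{\zeta_4}^2$, the same analysis that underlies Proposition~\ref{p2.4}. First one checks that $G$ acts symplectically and that all its non-identity elements have only isolated fixed points: $\diag(\zeta_4,\zeta_4^3)$ multiplies $dz_1\wedge dz_2$ by $\zeta_4^4=1$ and has no eigenvalue $1$ on $H^1(E_{\zeta_4}^2)$, so $q(\bar S_2)=0$, while $\diag(-1,-1)$ fixes only $E_{\zeta_4}[2]\times E_{\zeta_4}[2]$ and the other powers fix even fewer points. Hence $\bar S_2$ is a surface with rational double points, $K_{\bar S_2}\sim 0$ and $q(\bar S_2)=0$, so $\bar S_2$ is a K3 surface with Du Val singularities and its minimal resolution $S_2$ is a smooth K3 surface. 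Since $\diag(-1,1)$ commutes with $\diag(\zeta_4,\zeta_4^3)$, it descends to $\bar S_2$ and, by functoriality of the minimal resolution, lifts to an involution $g_2$ of $S_2$; as $\diag(-1,1)$ scales $dz_1\wedge dz_2$ by $-1$, this already gives $g_2^*\omega_{S_2}=-\omega_{S_2}$, the first half of~(2).

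Next I would determine $\Sing\bar S_2$ by an orbit--stabilizer count. Every point of $E_{\zeta_4}^2$ with non-trivial $G$-stabilizer lies in $E_{\zeta_4}[2]\times E_{\zeta_4}[2]$, the sixteen points fixed by $\diag(-1,-1)$. Among these, the four points of $\{0,(1+\zeta_4)/2\}\times\{0,(1+\zeta_4)/2\}$ are fixed by all of $G$ and give quotient singularities of type $\tfrac14(1,3)$, i.e.\ $A_3$; the remaining twelve points have stabilizer $\langle\diag(-1,-1)\rangle$ and form six $G$-orbits of length two, giving six $A_1$ points. Resolving produces $4\cdot 3+6=18$ smooth rational $(-2)$-curves. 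In parallel, the ``axis'' curves $\{p\}\times E_{\zeta_4}$ and $E_{\zeta_4}\times\{p\}$ with $p\in E_{\zeta_4}[2]$ map to $\bar S_2$: on $E_{\zeta_4}[2]$ the group $G$ has orbits $\{0\}$, $\{(1+\zeta_4)/2\}$ and $\{1/2,\zeta_4/2\}$, and on each orbit the stabilizer acts on the fibre $E_{\zeta_4}$ through an automorphism of order $2$ or $4$, so the three images $F_1,F_2,F_3$ (and symmetrically $G_1,G_2,G_3$) are rational curves. Altogether this gives $18+6=24$ rational curves, and tracking which axis curve passes through which singular point --- for instance $\{0\}\times E_{\zeta_4}$ meets the two $A_3$-points over $(0,0)$ and $(0,(1+\zeta_4)/2)$ and the $A_1$-point over $\{(0,1/2),(0,\zeta_4/2)\}$ --- reconstructs the incidence graph of Figure~\ref{fig2.2}.

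For the remainder of~(2) and for~(3): the involution $\diag(-1,1)$ fixes every point of $E_{\zeta_4}[2]\times E_{\zeta_4}[2]$, so $g_2$ fixes each singular point of $\bar S_2$ and stabilizes each exceptional chain, and it fixes every axis curve $\{p\}\times E_{\zeta_4}$ pointwise (because $-p=p$ for $p\in E_{\zeta_4}[2]$), hence fixes each $F_i$ and $G_j$ pointwise. A local computation at the four $A_3$-points --- where, in addition, the composites $g_2\cdot h$ for $h\in G$ contribute the fixed loci $(E_{\zeta_4})^{\pm\zeta_4}\times(E_{\zeta_4})^{\pm\zeta_4}$ --- shows that $g_2$ does not interchange the two ends of an $A_3$ chain and that on each such chain its fixed locus is precisely the middle curve $H_{ij}$. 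Collecting the fixed loci through the cover $E_{\zeta_4}^2\to\bar S_2\leftarrow S_2$, and using that an anti-symplectic involution of a K3 surface has smooth fixed locus, gives $S_2^{g_2}=\bigl(\coprod_{i}F_i\bigr)\coprod\bigl(\coprod_j G_j\bigr)\coprod\bigl(\coprod_{i,j\in\{1,3\}}H_{ij}\bigr)$. Finally, a Hodge-eigenvalue count for the $G$-action on $H^2(E_{\zeta_4}^2,\mathbb C)$ gives $\rho(S_2)=20$, so the $24$ rational curves span $\NS(S_2)\otimes\mathbb Q$; since each of them is $g_2$-stable, $g_2^*|_{\Pic(S_2)}=\id$.

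Statement~(4) is then formal. From~(2) we have $g_2^*=\id$ on $\Pic(S_2)$, and since $\rank T_{S_2}=22-\rho(S_2)=2$ we have $T_{S_2}\otimes\mathbb C=\mathbb C\omega_{S_2}\oplus\mathbb C\overline{\omega_{S_2}}$, on which $g_2^*$ acts by $-1$, so $g_2^*=-\id$ on $T_{S_2}$. For any $\varphi\in\Aut(S_2)$, $\varphi^*$ preserves $H^{2,0}(S_2)$, hence preserves $T_{S_2}$ and its orthogonal complement $\Pic(S_2)$; on each summand $g_2^*$ acts as $\pm\id$, which is central, so $g_2^*$ and $\varphi^*$ commute on $\Pic(S_2)\oplus T_{S_2}$ and therefore on all of $H^2(S_2,\mathbb Z)$, in which $\Pic(S_2)\oplus T_{S_2}$ has finite index. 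Thus $(g_2\circ\varphi)^*=(\varphi\circ g_2)^*$ on $H^2(S_2,\mathbb Z)$, and by the Torelli theorem --- equivalently, by the injectivity of $\Aut(S_2)\to O(H^2(S_2,\mathbb Z))$ --- we conclude $g_2\circ\varphi=\varphi\circ g_2$. The main obstacle is the explicit incidence analysis of the $24$ curves and, within it, the local study of the $g_2$-action at the four $A_3$-singularities needed for the middle-curve assertion in~(3); everything else follows routinely once the $\mathbb Z/4\mathbb Z$-orbit structure on $E_{\zeta_4}^2$ has been written out.
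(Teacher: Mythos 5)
The paper itself gives no proof of Proposition~\ref{p2.7}: it is quoted from \cite{oguiso1996most} and \cite{oguiso1999complete}, so your direct verification from the definition of $(S_2,g_2)$ is genuinely supplying an argument the paper omits, and it follows the same route as those references (orbit/stabilizer analysis on $E_{\zeta_4}[2]\times E_{\zeta_4}[2]$, local computation at the quotient singularities, and a Torelli/faithfulness argument for the commutation statement). The skeleton is sound: the singularity count ($4$ points of type $\tfrac14(1,3)=A_3$ and $6$ orbits of type $A_1$), the three $G$-orbits on $E_{\zeta_4}[2]$ producing $F_1,F_2,F_3$ and $G_1,G_2,G_3$, the identification $g_2^*\omega_{S_2}=-\omega_{S_2}$, and the eigenvalue count giving $\rho(S_2)=20$ are all correct, and part~(4) is handled exactly as one should (commutation on $\Pic\oplus T$, finite index in $H^2$, injectivity of $\Aut(S_2)\to O(H^2(S_2,\mathbb Z))$).

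Two points need more care than you give them. First, your parenthetical reason that $g_2$ fixes the axis curves pointwise (``because $-p=p$ for $p\in E_{\zeta_4}[2]$'') only applies to the $F_i$, which come from $\{p\}\times E_{\zeta_4}$; on $E_{\zeta_4}\times\{q\}$ the lift $\diag(-1,1)$ acts by $z_1\mapsto -z_1$, which is \emph{not} the identity upstairs. The correct statement is that $g_2$ on $\bar S_2$ is induced by the whole coset $\diag(-1,1)\cdot G$, and the element $\diag(1,-1)=\diag(-1,1)\cdot\diag(-1,-1)$ fixes $E_{\zeta_4}\times E_{\zeta_4}[2]$ pointwise (equivalently, $-1$ lies in the stabilizer acting on each such axis curve, so the induced map on its image $E_{\zeta_4}/\langle\pm1\rangle$ or $E_{\zeta_4}/\langle\zeta_4\rangle$ is trivial). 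You do invoke the coset for the extra isolated contributions $(E_{\zeta_4})^{\pm\zeta_4}\times(E_{\zeta_4})^{\pm\zeta_4}$, so this is an easy repair, but as written the justification for the $G_j$ is wrong. Second, the deduction $g_2^*|_{\Pic(S_2)}=\id$ rests on the unproved assertion that the $24$ curves span $\NS(S_2)\otimes\mathbb Q$; the Hodge count only gives $\rho(S_2)=20$. You should either add the one-line lattice argument (the $18$ exceptional classes have rank $18$, and the total transforms of the two fibre classes of $\bar S_2\to E_{\zeta_4}/\langle\zeta_4\rangle\simeq\mathbb P^1$, which are $\mathbb Q$-combinations of $F_1$, resp.\ $G_1$, and exceptional curves, span the rank-$2$ orthogonal complement), or replace it by a topological Lefschetz count: once $S_2^{g_2}$ is known to be $10$ disjoint smooth rational curves, $\chi_{\top}(S_2^{g_2})=20$ forces the $(+1)$-eigenspace of $g_2^*$ on $H^2$ to have rank $20$, hence to be $\Pic(S_2)\otimes\mathbb C$. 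With those two repairs the proof is complete.
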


\begin{Proposition}\label{p2.8}
  Let $(S,g)$ be a pair of a smooth K3 surface $S$ and an automorphism $g$ of $S$. Assume that
  \begin{enumerate}[\quad \rm 1)]\itemsep=1mm
    \item $g^2=\id$, the identity on $S$;
    \item$g^*\omega_S= -\omega_S$, where $\omega_S$ is a nowhere vanishing holomorphic $2$-form on $S$;
    \item $S^g$ consists of only rational curves;
    \item $S^g$ contains at least $10$ rational curves.
  \end{enumerate}
Then $(S,g)\simeq (S_2,g_2)$. Moreover, $S^g$ consists of exactly $10$ rational curves.
\end{Proposition}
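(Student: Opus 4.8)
\emph{Proof strategy.} The plan is to combine Nikulin's classification of non-symplectic involutions on K3 surfaces with the Shioda--Inose description of singular K3 surfaces of Picard number $20$. First I would note that (1) and (2) make $g$ a non-symplectic involution, and record the standard fact that $g^*$ acts as $-\id$ on the transcendental lattice $T_S$: decomposing $T_S\otimes\mathbb{Q}$ into the $(\pm1)$-eigenspaces $T^{\pm}$ of $g^*$, one has $\omega_S\in T^-\otimes\mathbb{C}$, so $T^+\otimes\mathbb{C}$ is orthogonal to $H^{2,0}\oplus H^{0,2}$ and hence $T^+\subseteq\NS(S)\cap T_S=0$. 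Thus the invariant lattice $L^+:=H^2(S,\mathbb{Z})^g$ lies in $\NS(S)$, and its orthogonal complement $L^-$ (the primitive sublattice where $g^*=-\id$) contains $T_S$. I would then invoke Nikulin's structure theorem for $S^g$ in terms of the invariants $(r,a,\delta)$ of the $2$-elementary lattice $L^+$ ($r=\rank L^+$, $|(L^+)^*/L^+|=2^a$, $\delta\in\{0,1\}$): apart from the exceptional triples $(10,10,0)$, where $S^g=\emptyset$, and $(10,8,0)$, where $S^g$ is a disjoint pair of elliptic curves, $S^g$ is the disjoint union of a smooth curve of genus $(22-r-a)/2$ with $(r-a)/2$ copies of $\mathbb{P}^1$.

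Next I would feed in hypotheses (3) and (4). By (4), $S^g\ne\emptyset$, ruling out $(10,10,0)$; by (3), $S^g$ contains no curve of positive genus, ruling out $(10,8,0)$ and forcing the genus $(22-r-a)/2$ to vanish, i.e.\ $r+a=22$. In that case $S^g$ consists of exactly $(r-a)/2+1=r-10$ rational curves, so (4) gives $r\ge 20$; since $r\le\rho(S)\le20$ we get $r=20$, $a=2$, and $S^g$ has exactly $10$ rational curves --- this already yields the ``Moreover'' clause. From $\rank L^-=22-r=2$ it follows that $\rho(S)=20$ (so $S$ is a singular K3 surface), $T_S=L^-$, and $L^+=\NS(S)$ with $g^*|_{\NS(S)}=\id$. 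The lattice $T_S$ is now a positive-definite even $2$-elementary lattice of rank $2$ with discriminant group $(\mathbb{Z}/2)^{\oplus2}$; the only such lattice is $\langle2\rangle\oplus\langle2\rangle$, which forces $\delta=1$ (its discriminant form takes the value $\tfrac12\notin\mathbb{Z}$), and this is precisely the transcendental lattice of $S_2$ (the ``discriminant $4$'' of Definition~\ref{d2.6}).

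Finally, for the isomorphism I would use that a singular K3 surface is determined up to isomorphism by the (oriented) isometry class of its transcendental lattice (Shioda--Inose); since the binary form $2(x^2+y^2)$ has class number one, this gives $S\simeq S_2$. Transporting $g$ along any such isomorphism, I would observe that the resulting involution and $g_2$ both act as $\id$ on $\NS(S_2)$ and as $-\id$ on $T_{S_2}$, hence induce the same isometry of $H^2(S_2,\mathbb{Z})$; the injectivity part of the Global Torelli theorem then identifies them, so $(S,g)\simeq(S_2,g_2)$.

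I expect the main obstacle to be the middle step: quoting Nikulin's list precisely (the two exceptional triples and the exact fixed-locus combinatorics) and correctly identifying the rank-$2$ piece of the lattice data, together with pinning down $T_{S_2}\cong\langle2\rangle\oplus\langle2\rangle$. Once those are in place, the Shioda--Inose identification of $S$ and the Torelli argument for uniqueness of the involution are routine.
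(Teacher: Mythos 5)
Your argument is sound, and it is worth noting that the paper itself offers no proof of Proposition~\ref{p2.8}: it is quoted from \cite{oguiso1996most} and \cite{oguiso1999complete}. Your route through Nikulin's classification of non-symplectic involutions is a legitimate, self-contained derivation and in fact recovers the same skeleton as the cited source: force $\rho(S)=20$ with $g^*|_{\NS(S)}=\id$ and $g^*|_{T_S}=-\id$, identify $T_S\simeq\langle2\rangle\oplus\langle2\rangle$ as the unique even positive-definite $2$-elementary lattice of rank $2$, and finish with Shioda--Inose plus the injectivity of $\Aut(S)\to O(H^2(S,\mathbb Z))$. The difference is mainly one of packaging: Oguiso--Zhang get $r=20$ directly from the topological Lefschetz formula ($\chi_{\top}(S^g)\ge 20$ forces $\tr(g^*|_{H^2})\ge 18$), whereas you extract both the bound $r\ge 20$ and the exact count of $10$ rational curves from Nikulin's fixed-locus formula for the triple $(r,a,\delta)$ --- a heavier imported theorem, but one that handles the two exceptional triples and the genus-zero condition in one stroke. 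All the individual steps check out: $T^+=0$ follows from Lefschetz $(1,1)$; the reduction theory of even binary forms confirms that $\langle2\rangle\oplus\langle2\rangle$ is the only candidate (so $\delta=1$ and the class number is one); $T_{S_2}\simeq\langle2\rangle\oplus\langle2\rangle$ is Vinberg's computation for the ``most algebraic'' K3 surface $X_4$, consistent with the paper's ``discriminant $4$'' terminology; and since $H^2(S,\mathbb Z)$ embeds in $(\NS(S)\oplus T_S)\otimes\mathbb Q$, two isometries agreeing on $\NS(S)\oplus T_S$ agree on $H^2(S,\mathbb Z)$, so the Torelli step is fine. The only point you should make explicit is the one you already flag: a clean reference or computation for $T_{S_2}$, without which the final identification is not closed.
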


\section{The Classification}\label{sec2.4}

In this section, we assume that $Z$ is a log Enriques surface of rank 18 without Du Val singularities. Let $\pi: \bar S\to Z$ be the canonical cover, and $\nu: S\to \bar S$ the minimal resolution with exceptional divisor $\Delta:=\Delta_S$. Since the canonical cover $\bar S\to Z$ is unramified in codimension
one,  every curve in $S^{[\langle g\rangle]}$ is contained in
$\Delta$. In particular, $S^{[\langle g\rangle]}$ consists
of only smooth rational curves and a finite number of isolated
points, and $\Delta$ is $g$-stable.

\medskip

In general, let $S$ be a K3 surface, and $g$ an automorphism of $S$ of order $n$. Let $T_S$ be its transcendental lattice. Note that $g$ induces
actions $g^*$ on $\Pic(S)\otimes \mathbb C$ and on $T_S\otimes
\mathbb C$. Since $g^n=\id$, these actions are diagonalizable and
every eigenvalue of $g^*$ is an $n$th root of unity, say $\zeta_n^i$
for some $0\leq i<n$. Since $g^*$ is well-defined on $\Pic(S)$ and $T_S$, the number of
eigenvalues $\zeta_n^i$ of $g^*|_{\Pic(S)\otimes \mathbb C}$ and
$g^*|_{T_S\otimes \mathbb C}$ equals to that of the conjugate
eigenvalues $\bar \zeta_n^i$, respectively. By noting that $\dim
H^2(S,\mathbb C)=22$, we have the following lemma:

\begin{Lemma}[{\cite[Lemma 2.0]{oguiso1996most}}]\label{l2.9}
With the notations above, let $t_0$ and $r_0$ be the rank of the invariant lattices
$(\Pic(S))^{g^*}$ and $(T_S)^{g^*}$, respectively. Let $I_s$ denote the identity matrix of size $s$.
\begin{enumerate}[\quad \rm 1)]\itemsep=1mm
\item If $n=2k+1$ is odd, then $\rho(S)=t_0+ 2\sum_{i=1}^k t_i$ and
\begin{align*}
  g^*|_{\Pic(S)\otimes \mathbb C} &= \diag(I_{t_0}, \zeta_n I_{t_1}, \bar \zeta_n I_{t_1}, \zeta_n^2 I_{t_2}, \bar \zeta_n^2 I_{t_2}, \ldots, \zeta_n^k I_{t_k}, \bar \zeta_n^k I_{t_k}),\\
  g^*|_{T_S\otimes \mathbb C} & = \diag(I_{r_0}, \zeta_n I_{r_1}, \bar \zeta_n I_{r_1}, \zeta_n^2 I_{r_2}, \bar \zeta_n^2 I_{r_2}, \ldots, \zeta_n^k I_{r_k}, \bar \zeta_n^k I_{r_k}),
\end{align*}
and $t_0+r_0+2\sum_{i=1}^k t_i + 2\sum_{i=1}^k r_i=22$.

\item If $n=2k$ is even, then $\rho(S)=t_0+2\sum_{i=1}^{k-1}t_i +t_k$ and
\begin{align*}
  g^*|_{\Pic(S)\otimes \mathbb C} & = \diag(I_{t_0}, \zeta_n I_{t_1}, \bar \zeta_n I_{t_1}, \zeta_n^2 I_{t_2}, \bar \zeta_n^2 I_{t_2}, \ldots, \zeta_n^{k-1} I_{t_{k-1}}, \bar \zeta_n^{k-1} I_{t_{k-1}}, -I_{t_k}),\\
  g^*|_{T_S\otimes \mathbb C} & = \diag(I_{r_0}, \zeta_n I_{r_1}, \bar \zeta_n I_{r_1}, \zeta_n^2 I_{r_2}, \bar \zeta_n^2 I_{r_2}, \ldots, \zeta_n^{k-1} I_{r_{k-1}}, \bar \zeta_n^{k-1} I_{r_{k-1}}, -I_{r_k}),
\end{align*}
and $t_0+r_0+ 2\sum_{i=1}^{k-1} t_i + 2\sum_{i=1}^k r_i + t_k+r_k=22$.
\end{enumerate}
\end{Lemma}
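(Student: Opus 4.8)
The plan is to reduce the whole statement to linear algebra on the lattice $H^2(S,\mathbb Z)$, feeding in only the two standard facts that $b_2(S)=22$ for a K3 surface and that $\Pic(S)\oplus T_S$ has finite index in $H^2(S,\mathbb Z)$, so that $H^2(S,\mathbb C)=(\Pic(S)\otimes\mathbb C)\oplus(T_S\otimes\mathbb C)$. First I would note that, being pullback along a biholomorphism, $g^*$ preserves the cup product and the Hodge decomposition of $H^2(S,\mathbb C)$; hence it preserves $\Pic(S)=\NS(S)=H^{1,1}(S)\cap H^2(S,\mathbb Z)$ (Lefschetz $(1,1)$ for the K3 surface $S$) and therefore also its orthogonal complement $T_S$. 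Since $g^n=\id$ we have $(g^*)^n=\id$, so on each of $H^2(S,\mathbb C)$, $\Pic(S)\otimes\mathbb C$ and $T_S\otimes\mathbb C$ the operator $g^*$ is diagonalizable with all eigenvalues $n$-th roots of unity, i.e.\ of the form $\zeta_n^i$ with $0\le i<n$.

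The one point that needs an argument rather than a definition is that $\zeta_n^i$ and its conjugate $\bar\zeta_n^i=\zeta_n^{n-i}$ occur with the same multiplicity. This is because $g^*$ restricts to a $\mathbb Z$-linear automorphism of the free module $\Pic(S)$, hence is defined over $\mathbb R$, so complex conjugation on $\Pic(S)\otimes\mathbb C=(\Pic(S)\otimes\mathbb R)\otimes_{\mathbb R}\mathbb C$ commutes with $g^*$ and carries the $\zeta_n^i$-eigenspace isomorphically onto the $\bar\zeta_n^i$-eigenspace. (Equivalently, the characteristic polynomial of $g^*|_{\Pic(S)}$ lies in $\mathbb Z[x]$, so its roots are stable under $\lambda\mapsto\bar\lambda$.) I would let $t_i$ be the common dimension of these two eigenspaces, run the identical argument on $T_S$ to obtain $r_i$, and observe that $t_0$ and $r_0$ are then exactly the ranks of $(\Pic(S))^{g^*}$ and $(T_S)^{g^*}$.

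What is left is organising the bookkeeping by the parity of $n$. For $n=2k+1$ odd, the $n$-th roots of unity are $\{1\}$ together with the $k$ conjugate pairs $\{\zeta_n^i,\bar\zeta_n^i\}$, $1\le i\le k$, which immediately gives the stated diagonal form of $g^*|_{\Pic(S)\otimes\mathbb C}$ and $\rho(S)=t_0+2\sum_{i=1}^k t_i$, and likewise for $T_S$. For $n=2k$ even, $-1=\zeta_n^k$ is real and self-conjugate, which is precisely why it appears as a single undoubled block $-I_{t_k}$ (resp.\ $-I_{r_k}$), the other eigenvalues pairing as $\{\zeta_n^i,\bar\zeta_n^i\}$ for $1\le i\le k-1$. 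Comparing $\dim_{\mathbb C}H^2(S,\mathbb C)=b_2(S)=22$ with the total size of the two diagonal matrices then yields the asserted numerical relation in each case. I do not anticipate any serious obstacle: the only substantive ingredient is the conjugate-eigenvalue symmetry above, i.e.\ the integrality of the $g^*$-action on the N\'eron--Severi and transcendental lattices; everything else is counting.
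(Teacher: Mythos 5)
Your argument is correct and is essentially the same as the paper's: the paper's justification (the paragraph preceding the lemma, which then cites Oguiso--Zhang) consists exactly of diagonalizability from $g^n=\id$, equality of multiplicities of conjugate eigenvalues because $g^*$ is defined over $\mathbb Z$ on $\Pic(S)$ and $T_S$, and the count $\dim H^2(S,\mathbb C)=22$. You have merely filled in the routine details (preservation of $\Pic(S)$ and $T_S$, the parity bookkeeping), so there is nothing to add.
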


\section*{4.1. Classification When $I=3$}\label{sec2.4.1}

Let $(S,g)$ be a pair of smooth K3 surface $S$ and an automorphism
$g$ of $S$. We assume that $g^*\omega_S= \zeta_3 \omega_S$ for a nowhere
vanishing holomorphic $2$-form $\omega_S$ on $S$.


Let $P$ be an isolated $g$-fixed point on $S$. Then $g^*$ can be
written  as $\diag(\zeta_3^a,\zeta_3^b)$ for some $a,b\in \{1,2\}$
with $a+b\equiv1 \pmod 3$ under some appropriate local coordinates
around $P$ because $g^*\omega_S= \zeta_3\omega_S$. We see that
$a=b=2$ and the action is $\diag(\zeta_3^2,\zeta_3^2)$. If $C$ is a $g$-fixed irreducible curve and $Q\in C$, then it also
follows from $g^*\omega_S=\zeta_3\omega_S$ that $g^*$ can be written
as $\diag(1,\zeta_3)$ under some appropriate local coordinates
around $Q$. In particular, the $g$-fixed curves are smooth and
mutually disjoint.

We need to use the following lemma in the classification for $I=3$.

\begin{Lemma}[{``Three Go'' Lemma, \cite[Lemma~ 2.2]{oguiso1996most}}]\label{l2.10} Let $(S,g)$ be a pair of smooth K3 surface $S$ and an automorphism $g$ of $S$. Assume that $g^3=\id$ and $g^*\omega_S=\zeta_3\omega_S$.
  \begin{enumerate}[\quad \rm 1)]\itemsep=1mm
    \item Let $C_1-C_2-C_3$ be a linear chain of $g$-stable smooth rational curves. Then exactly one of $C_i$ is $g$-fixed.
    \item Let $C$ be a $g$-stable but not $g$-fixed smooth rational curve. Then there is a unique $g$-fixed curve $D$ such that $C\cdot D=1$.
    \item Let $M$ and $N$ be the number of smooth rational curves and the number of isolated points in $S^g$, respectively. Then $M-N=3$.
  \end{enumerate}
\end{Lemma}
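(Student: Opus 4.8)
The plan is to handle parts (1) and (2) by a local analysis of the $g$-action near the fixed points, and part (3) by the Lefschetz fixed-point formulas. The local inputs are already recorded in the paragraph preceding the lemma: at an isolated $g$-fixed point the action is $\diag(\zeta_3^2,\zeta_3^2)$, at a point of a $g$-fixed curve it is $\diag(1,\zeta_3)$ with the $1$-eigenline tangent to the curve, and the $g$-fixed curves are smooth and pairwise disjoint. To these I would add one elementary observation about a $g$-stable smooth rational curve $C$: since $g|_C\in\Aut(\mathbb P^1)$ has order $1$ or $3$, the curve $C$ is $g$-fixed precisely when $g$ acts trivially on the tangent line $T_PC$ at one (equivalently every) fixed point $P$ of $g|_C$, and if $C$ is not $g$-fixed then $g|_C$ has exactly two fixed points, at which $g$ acts on the tangent line of $C$ by $\zeta_3$ and by $\zeta_3^2$ respectively. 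Finally, at a $g$-fixed point through which two $g$-stable smooth curves pass transversally the two tangent lines are the eigenlines of $dg$, and the eigenvalue exponents $a,b$ on them satisfy $a+b\equiv 1\pmod 3$ because $g^*\omega_S=\zeta_3\omega_S$.

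For part (1), put $P=C_1\cap C_2$ and $P'=C_2\cap C_3$, distinct $g$-fixed points (distinct since $C_1\cdot C_3=0$). If $C_2$ is $g$-fixed, its tangent exponents at $P$ and $P'$ both vanish, forcing those of $C_1$ and $C_3$ to be nonzero, so neither is $g$-fixed; if $C_2$ is not $g$-fixed, then $P,P'$ are its two $g|_{C_2}$-fixed points, so the tangent exponents of $C_2$ there are $1$ and $2$ in some order, and $a+b\equiv 1$ then makes exactly one of $C_1,C_3$ have tangent exponent $0$, i.e.\ be $g$-fixed. Either way exactly one $C_i$ is $g$-fixed. For part (2), let $C$ be $g$-stable but not $g$-fixed, and let $P_1$ (tangent exponent $1$) and $P_2$ (tangent exponent $2$) be its two $g|_C$-fixed points. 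At $P_1$ the transverse exponent is $0$ by $a+b\equiv 1$, so — $S^g$ being smooth with tangent space the $1$-eigenspace of $dg$ — the point $P_1$ lies on a $g$-fixed curve $D$, which is unique (fixed curves being disjoint) and meets $C$ transversally there; at $P_2$ the transverse exponent is $2$, so the local action is $\diag(\zeta_3^2,\zeta_3^2)$ and $P_2$ is isolated, lying on no $g$-fixed curve. Since $C\cap D\subseteq S^g\cap C=\{P_1,P_2\}$ and $P_2\notin D$, we get $C\cap D=\{P_1\}$ transversally, hence $C\cdot D=1$; uniqueness of $D$ is then immediate, since any $g$-fixed curve meeting $C$ must pass through $P_1$ or $P_2$, not through $P_2$, and $D$ is the only $g$-fixed curve through $P_1$.

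For part (3) I would invoke the Lefschetz fixed-point formulas. In the cases at hand the $g$-fixed curves are smooth rational (they sit inside a union of $ADE$ curves; cf.\ the start of Section~\ref{sec2.4}), so the topological formula gives $2M+N=\sum_i(-1)^i\tr(g^*\mid H^i(S,\mathbb C))=2+\tr(g^*\mid H^2(S,\mathbb C))$, locating $2M+N$ in terms of the eigenvalue data of Lemma~\ref{l2.9}. The holomorphic (Atiyah--Bott) Lefschetz formula for $\mathcal O_S$ has left-hand side $\sum_i(-1)^i\tr(g^*\mid H^i(S,\mathcal O_S))=1+\zeta_3^2$ — since $g$ acts on $H^2(S,\mathcal O_S)\cong H^0(S,K_S)^\vee$ by $\zeta_3^{-1}$ — and right-hand side a contribution $\tfrac{1}{(1-\zeta_3^2)^2}$ for each isolated fixed point together with a contribution $\tfrac{1-h}{1-\zeta_3}-\tfrac{\zeta_3\,D^2}{(1-\zeta_3)^2}$ for each fixed curve $D$ of genus $h$ (here $h=0$, $D^2=-2$, normal eigenvalue $\zeta_3$). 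As the left-hand side is a constant independent of $S$, this single identity already pins down $M-N$, which is part (3), and the topological formula then ties the common value to $\rho(S)$ and $\rank T_S$ via Lemma~\ref{l2.9}, as the surrounding classification requires. Steps (1) and (2) are routine once the normal forms are at hand; I expect the main obstacle to be part (3): correctly assembling the Atiyah--Bott contributions — tracking the $g$-action on the normal bundles of the fixed curves and on $H^0(S,K_S)$ — and carrying out the cyclotomic arithmetic that collapses them to the clean integer relation between $M$ and $N$.
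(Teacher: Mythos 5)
Your argument is correct, and it is essentially the canonical one: the paper offers no proof of this lemma (it is quoted from Oguiso--Zhang), but your local eigenvalue analysis for (1)--(2) and the holomorphic Lefschetz computation for (3) are exactly the method the paper uses to prove the parallel ``Four Go'' Lemma~\ref{l2.14}. One caveat about part (3), which you defer to ``cyclotomic arithmetic'': if you finish the computation, the isolated-point contribution $\frac{1}{(1-\zeta_3^2)^2}=\frac{1}{6}(1-i\sqrt3)$ and the curve contribution $-\frac{1}{6}(1-i\sqrt3)$ are exact negatives of each other, and equating their sum with $1+\zeta_3^{-1}=-\zeta_3=\frac{1}{2}(1-i\sqrt3)$ gives
\[
(\#\ \text{isolated points})-(\#\ \text{fixed curves})=3,
\]
which in the labelling of the statement reads $N-M=3$, not $M-N=3$. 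This is a slip in the statement itself rather than in your proof: on $(S_3,g_3)$ there are $9$ isolated fixed points and $6$ fixed curves, and Step~5 of Proposition~\ref{p2.11} applies the lemma with $M$ denoting the isolated points and $N$ the curves. So carry the arithmetic to the end and record explicitly which difference equals $3$; otherwise the writeup is complete.
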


\medskip

Suppose $I(Z)=3$. Then the associated pair $(S,g)$ satisfies the conditions in Lemma~\ref{l2.10}. We first determine a possible list of the Dynkin's types of $\Delta$.

\begin{Proposition}\label{p2.11} With the notations as in Main Theorem, suppose $I(Z)=3$. Then $(S,g)\simeq (S_3,g_3)$, the Shioda-Inose's pair of discriminant $3$. Moreover, $\Delta$ is of one of the following $13$ types:
  \begin{enumerate}[\quad \rm I.]\itemsep=1mm
  \item $A_{18}$;
  \item $D_{18}$;
  \item $A_{3m}\oplus A_{3n}$,\quad $m+n=6$;
  \item $D_{3m}\oplus A_{3n}$,\quad $m+n=6$;
  \item $D_{3m}\oplus D_{3n}$,\quad $m+n=6$;
  \item $D_{3m+1}\oplus A_{3n-1}$,\quad $m+n=6$;
  \item $A_{3m}\oplus A_{3n}\oplus A_{3r}$,\quad $m+n+r=6$;
  \item $D_6\oplus D_6\oplus D_6$,
  \item $A_{3m}\oplus D_{3n}\oplus D_{3r}$,\quad $m+n+r=6$;
  \item $A_{3m}\oplus A_{3n}\oplus D_{3r}$,\quad $m+n+r=6$;
  \item $D_{3m+1}\oplus A_{3n}\oplus A_{3r-1}$,\quad $m+n+r=6$;
  \item $D_{3m+1}\oplus D_{3n+1}\oplus A_{3r-2}$,\quad $m+n+r=6$;
  \item $D_{3m+1}\oplus D_{3n}\oplus A_{3r-1}$,\quad $m+n+r=6$.
  \end{enumerate}
\end{Proposition}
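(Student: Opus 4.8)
The plan is to first pin down the pair $(S,g)$ via the rigidity result of Proposition~\ref{p2.5}, and then to extract the combinatorial constraints on $\Delta$ from the ``Three Go'' Lemma~\ref{l2.10} together with the fact that $S=S_3$ carries exactly the $24$ rational curves described in Proposition~\ref{p2.4}. First I would verify the four hypotheses of Proposition~\ref{p2.5}: by assumption $g^3=\id$ (since $I=3$) and $g^*\omega_S=\zeta_3\omega_S$; the fixed locus $S^g$ consists only of smooth rational curves and isolated points because $\bar S\to Z$ is étale in codimension one, so every $g$-fixed curve lies in $\Delta$ and hence is a smooth rational $(-2)$-curve (the earlier local analysis shows $g$-fixed curves are smooth and disjoint, and isolated fixed points have linearizable action $\diag(\zeta_3^2,\zeta_3^2)$); finally, to get at least $6$ rational curves in $S^g$ I would count, using $\rank\Delta=18$, that $\Delta$ is a disjoint union of $ADE$-chains with $18$ vertices in total, and in each such chain the $g$-fixed curves are ``every third curve'' (by part~1 of Lemma~\ref{l2.10} applied to consecutive triples), forcing at least $6$ of the $18$ components to be $g$-fixed. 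Then Proposition~\ref{p2.5} gives $(S,g)\simeq(S_3,g_3)$ and moreover $S^g$ has exactly $6$ rational curves and $9$ isolated points.

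Next I would transfer this to the structure of $\Delta$. Since on $S_3$ we have $g_3^*|_{\Pic(S_3)}=\id$, every component of $\Delta$ is $g$-stable, and the $g$-fixed components of $\Delta$ are precisely those among $F_1,F_2,F_3,G_1,G_2,G_3$ that happen to lie in $\Delta$ — but $S^g$ has exactly $6$ fixed rational curves and they are exactly $F_1,F_2,F_3,G_1,G_2,G_3$; so $\Delta$ contains at most these $6$ as its $g$-fixed members, and in fact the count ``$18$ components, every third is $g$-fixed'' forces each $ADE$-component of $\Delta$ to have the property that exactly one third of its vertices are $g$-fixed. Now I would run the elementary combinatorial classification: a connected $ADE$ Dynkin diagram $\Gamma$ with $\ell$ vertices admits a partition of its vertex set into ``fixed'' and ``moved'' vertices such that (i) fixed vertices are pairwise non-adjacent, (ii) every length-3 linear sub-chain contains exactly one fixed vertex, and (iii) every moved vertex is adjacent to exactly one fixed vertex (this is exactly the content of parts 1 and 2 of Lemma~\ref{l2.10}). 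A short case analysis on the shape of $\Gamma$ shows that for type $A_\ell$ this is possible iff $\ell\equiv 0\pmod 3$ (one fixed vertex in each block of three, at the ``interior'' slots) or $\ell\equiv 1\pmod 3$ — wait, one must check the endpoints carefully: an $A_\ell$ chain works exactly when the fixed vertices sit at positions $2,5,8,\dots$, giving $\ell\in\{3m\}$, and the $\ell=3m+1$ subcase only survives when glued to a $D$-type end; for $D_\ell$ the branch vertex imposes that the node adjacent to the fork be treated uniformly, yielding $\ell\equiv 0$ or $1\pmod 3$; and type $E_6,E_7,E_8$ are excluded because the valence-3 vertex cannot satisfy conditions (i)–(iii) simultaneously (any fixed vertex at the branch point leaves a length-3 chain through it with a second fixed neighbor, or a moved leaf with no fixed neighbor). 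Assembling the admissible connected pieces so that the total number of vertices is $18$ and the total number of fixed vertices is exactly $6$ — equivalently, the sum of the $\ell_i$'s is $18$ and each $\ell_i$ is "one of the admissible residues" with the right fixed-count — produces precisely the list of $13$ types (I–XIII), where the $A_{3m+1}$ and $D_{3m+1}$ summands must be paired up so that the "extra" moved-curve at a chain end attaches to a shared $g$-fixed curve among $F_i,G_j$, which is the origin of the constraints like $D_{3m+1}\oplus A_{3n-1}$.

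The main obstacle I expect is the bookkeeping in the last step: it is not enough to classify connected pieces abstractly, because the $g$-fixed curves $F_i,G_j$ are a fixed global resource (only six of them, and they sit in a specific configuration on $S_3$), so a moved $(-2)$-curve at the end of an $A$- or $D$-chain that needs a $g$-fixed partner must borrow one of the $F_i,G_j$, and two different chain-ends can legitimately share the same $F_i$ (this is what makes combinations like $D_7\oplus A_5\oplus A_5$ or $D_7\oplus D_7\oplus A_4$ appear, and what rules out others). Concretely I would argue: let $a$ be the number of $g$-fixed components of $\Delta$; from the per-chain analysis $a=6$ forces every connected piece to contribute its full "one-third" of fixed vertices internally, except that a piece of type $A_{3m+1}$ or $D_{3m+1}$ has a deficit of one, which must be made up by an adjacency (in $S_3$) to one of the six $F_i/G_j$ not already used as an internal fixed curve — but all six are already accounted for as the internal fixed curves, so such a deficit-piece must instead share a fixed curve with another deficit-piece, forcing deficit-pieces to come in pairs (or a $+1$ surplus piece, which does not occur among $ADE$). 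Carefully enumerating how $\{\ell_i\}$ with $\sum\ell_i=18$, each $\ell_i\in\{18\}\cup\{3m,3m+1 : \text{admissible}\}\cup\{D\text{-types }3m,3m+1\}$, can have its residue-$1$ pieces paired, yields exactly the thirteen families listed. The proof of Proposition~\ref{p2.11} is then complete, with the promise that $(S,g)\simeq(S_3,g_3)$ delivered by Proposition~\ref{p2.5}.
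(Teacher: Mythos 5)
Your overall skeleton matches the paper's: establish $(S,g)\simeq(S_3,g_3)$ via Proposition~\ref{p2.5} and then classify $\Delta$ component by component using the ``Three Go'' Lemma. But there is a genuine gap at the pivotal step, namely the verification that $S^g$ contains at least $6$ rational curves. You claim this follows because ``in each chain the $g$-fixed curves are every third curve, forcing at least $6$ of the $18$ components to be $g$-fixed.'' That is false as stated: the admissible fixed/moved patterns are not uniformly ``one third fixed.'' A component of type $D_{3\ell+1}$ contributes only $\ell$ fixed curves out of $3\ell+1$ vertices (strictly less than a third), so a configuration such as $D_4\oplus D_4\oplus D_4\oplus A_6$ has rank $18$, satisfies all the local chain conditions from parts (1) and (2) of Lemma~\ref{l2.10}, and yet has only $5$ fixed curves. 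The local analysis alone cannot rule this out. What is needed — and what the paper uses — is part (3) of Lemma~\ref{l2.10}, the holomorphic-Lefschetz count relating the number of $g$-fixed curves $N$ to the number of isolated $g$-fixed points $M$ (for $(S_3,g_3)$ these are $6$ and $9$). Writing $\Delta=\bigoplus D_{3\ell_i+1}\oplus\bigoplus D_{3m_i}\oplus\bigoplus A_{3p_i}\oplus\bigoplus A_{3q_i-1}\oplus\bigoplus A_{3r_i-2}$ with multiplicities $a,b,c,d,e$, one counts the isolated fixed points contributed by each piece, combines $M-N=3$ with $\rank\Delta=18$ to get $N=6+(-a+d+2e)/3$, and shows $N\le 5$ forces $2a+b+c-e\ge 6>3$, a contradiction. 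You never invoke part (3), so your argument cannot close.

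Two secondary problems. First, your classification of admissible connected pieces is off: all three residues of $A_n$ admit valid patterns (the paper's Step~4 lists $n=3k-2$, $3k-1$, $3k$ explicitly), whereas you claim only $A_{3m}$ works standalone; it is $D_n$ that excludes $n\equiv 2\pmod 3$. Second, your mechanism for the pairing constraints (e.g.\ why $D_{3m+1}$ must be accompanied by $A_{3n-1}$) — two chain-ends ``sharing'' a $g$-fixed curve among the $F_i,G_j$ — is geometrically impossible: by Lemma~\ref{l2.10}(2) the fixed partner of a moved curve meets it, hence lies in the \emph{same} connected component of $\Delta$. The constraint $d+2e=a$ is purely numerical, coming from $N=6$ exactly, and the $13$ types are the nonnegative solutions of $d+2e=a$ together with $2a+b+c-e\le 3$. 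There is also an ordering issue: you deduce $g$-stability of components and curves from $g_3^*|_{\Pic(S_3)}=\id$ \emph{after} identifying $(S,g)$ with $(S_3,g_3)$, but the chain analysis you need to establish the $\ge 6$ hypothesis already presupposes that stability; the paper instead derives it first from the absence of Du Val singularities on $Z$ and the primality of $I=3$.
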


\begin{proof} Let $\Delta_i$ be a connected component of $\Delta$.

\medskip

Step 1: $\Delta_i$ is $g$-stable.

If $\Delta_i$ is not $g$-stable, then its image in $Z$ would be a Du
Val singular point since $I(Z)=3$ is a prime. However,  we have
assumed that $Z$ has no Du Val singularities.

\medskip

Step 2: $\Delta_i=A_n$ or $D_n$.

Suppose there is a $\Delta_i=E_n$ for some $n$. Let $C$ be the
center of $\Delta_i$, and $C_1,C_2,C_3$ the rational curves in
$\Delta_i$ which intersect $C$. Suppose $C_1$ is the twig of length one. By the uniqueness of $C$ and $C_1$, they are $g$-stable. If $C$ is
not $g$-fixed, then $\Delta_i=E_6$ and $g$ switches the other two
twigs, which contradicts $g^3=\id$. If $C$ is $g$-fixed, then each
irreducible curve in $\Delta_i$ is $g$-stable. Let $C_2-C_2'$ be a
twig of $\Delta_i$. Then $C_2'$ is not $g$-fixed and it does not
intersect with any $g$-fixed curve, which contradicts
Lemma~\ref{l2.10}.

\medskip

Step 3. Every irreducible curve in $\Delta_i$ is $g$-stable.

i) Let $\Delta_i=A_n$. Write the irreducible curves in $\Delta_i$ as
a  chain $C_1-C_2-\cdots -C_n$. For $n>1$, if $C_1$ is not
$g$-stable, we must have $g(C_1)=C_n$ and $g(C_n)=g(C_1)$, and this
contradicts $g^3=\id$.

ii) Let $\Delta_i=D_n$. Then by the uniqueness its center $C$ is
$g$-stable. Let $C_1$ and $C_2$ be twigs of length one, and $C_3$
the curve of another twig which intersects $C$.

Suppose $n>4$. Then every irreducible component in the longest twig
shall be $g$-stable. If $C_1$ is not $g$-stable, then $g(C_1)=C_2$
and $g(C_2)=C_1$, which contradicts $g^3=\id$. Thus, every
irreducible curve in $\Delta_i$ is $g$-stable. Suppose $n=4$. If $C_1$ is not $g$-stable, we must have
$g(C_1)=C_2$, $g(C_2)=C_3$ and $g(C_3)=g(C_1)$. In particular, $C$
is not $g$-fixed, and it does not intersect with any $g$-fixed curve.
This contradicts Lemma~\ref{l2.10}. Therefore, $C_1$ is $g$-stable.
We see similarly as in the case $n>4$ that $C_2$ and $C_3$ are both
$g$-stable.

\medskip

Step 4. The $g$-fixed curves of $\Delta_i$ are described as follows.

We use ``$f$'' to denote $g$-fixed curves, and ``$s$'' to denote
$g$-stable but not $g$-fixed curves in $\Delta_i$. $k$ is the number of $g$-fixed curves in $\Delta_i$.

i) Suppose $\Delta_i=A_n$.
\begin{enumerate}[\qquad a)]\itemsep=1mm
  \item $n=3k-2$:
  \begin{equation*}
  \begin{psmatrix}[rowsep=3mm,colsep=5mm,linewidth=0.5pt, nodesep=1mm]
  f & s & s & f & s & \cdots & s & s & f
  \ncline{1,1}{1,2}
  \ncline{1,2}{1,3}
  \ncline{1,3}{1,4}
  \ncline{1,4}{1,5}
  \ncline{1,5}{1,6}
  \ncline{1,6}{1,7}
  \ncline{1,7}{1,8}
  \ncline{1,8}{1,9}
  \end{psmatrix}
  \end{equation*}

  \item $n=3k-1$:
\begin{equation*}
\begin{psmatrix}[rowsep=3mm,colsep=5mm,linewidth=0.5pt, nodesep=1mm]
  f & s & s & f & s & \cdots & s & f & s
  \ncline{1,1}{1,2}
  \ncline{1,2}{1,3}
  \ncline{1,3}{1,4}
  \ncline{1,4}{1,5}
  \ncline{1,5}{1,6}
  \ncline{1,6}{1,7}
  \ncline{1,7}{1,8}
  \ncline{1,8}{1,9}
\end{psmatrix}
\end{equation*}

\item $n=3k$:
\begin{equation*}
\begin{psmatrix}[rowsep=3mm,colsep=5mm,linewidth=0.5pt, nodesep=1mm]
  s & f & s & s& f & \cdots & s & f & s
  \ncline{1,1}{1,2}
  \ncline{1,2}{1,3}
  \ncline{1,3}{1,4}
  \ncline{1,4}{1,5}
  \ncline{1,5}{1,6}
  \ncline{1,6}{1,7}
  \ncline{1,7}{1,8}
  \ncline{1,8}{1,9}
\end{psmatrix}
\end{equation*}
\end{enumerate}

ii) Suppose $\Delta_i=D_n$.

\begin{enumerate}[\qquad a)]\itemsep=1mm

\item $n=3k$:
\begin{equation*}
\begin{psmatrix}[rowsep=3mm,colsep=5mm,linewidth=0.5pt, nodesep=1mm]
  & s \\
  s & f & s & s & f & \cdots & s & s & f
  \ncline{1,2}{2,2}
  \ncline{2,1}{2,2}
  \ncline{2,2}{2,3}
  \ncline{2,3}{2,4}
  \ncline{2,4}{2,5}
  \ncline{2,5}{2,6}
  \ncline{2,6}{2,7}
  \ncline{2,7}{2,8}
  \ncline{2,8}{2,9}
\end{psmatrix}
\end{equation*}

\item $n=3k+1$:
\begin{equation*}
\begin{psmatrix}[rowsep=3mm,colsep=5mm,linewidth=0.5pt, nodesep=1mm]
  & s \\
  s & f & s & s & f & \cdots & s & f & s
  \ncline{1,2}{2,2}
  \ncline{2,1}{2,2}
  \ncline{2,2}{2,3}
  \ncline{2,3}{2,4}
  \ncline{2,4}{2,5}
  \ncline{2,5}{2,6}
  \ncline{2,6}{2,7}
  \ncline{2,7}{2,8}
  \ncline{2,8}{2,9}
\end{psmatrix}
\end{equation*}
\end{enumerate}

The case $\Delta_i=A_n$ follows from Lemma~\ref{l2.10}. Suppose
$\Delta_i=D_n$. Then by Step 3, the center $C$ is $g$-fixed. So in
the longest twig $C_3-C_4-\cdots-C_{n-1}$ of $\Delta_i$, by
induction, $C_{3j+2}$ are $g$-fixed and others are not. If $n=3k+2$
for some $k$, then $C_{n-2}$ and $C_{n-1}$ are not $g$-fixed, and
$C_{n-1}$ does not intersect with any $g$-fixed curve, a
contradiction to Lemma~\ref{l2.10}. Therefore, $n\not\equiv 2\pmod
3$.

\medskip

Step 5. $(S,g)\simeq (S_3,g_3)$.

Let $M$ be the number of isolated $g$-fixed points and $N$ the
number of $g$-fixed curves in $\Delta$. We can decompose
\begin{equation*}
  \Delta= \bigoplus_{i=1}^a D_{3\ell_i+1} \oplus \bigoplus_{i=1}^b D_{3m_i} \oplus \bigoplus_{i=1}^c A_{3p_i} \oplus \bigoplus_{i=1}^d A_{3q_i-1} \oplus \bigoplus_{i=1}^e A_{3r_i-2}.
\end{equation*}
Then
\begin{align*}
  N & = \sum_{i=1}^a \ell_i+ \sum_{i=1}^b m_i+ \sum_{i=1}^c p_i+ \sum_{i=1}^d q_i+ \sum_{i=1}^e r_i, \\
  M & \geq  \sum_{i=1}^a (\ell_i+2)+ \sum_{i=1}^b (m_i+1)+ \sum_{i=1}^c (p_i+1)+ \sum_{i=1}^d q_i+ \sum_{i=1}^e (r_i-1) \\
  & = N+(2a+b+c-e).
\end{align*}
Thus, by Lemma~\ref{l2.10}, $3=M-N\geq 2a+b+c-e$. Recall that
\begin{align*}
  \rank \Delta =18 & =  \sum_{i=1}^a (3\ell_i+1)+ \sum_{i=1}^b 3m_i+ \sum_{i=1}^c 3p_i+ \sum_{i=1}^d (3q_i-1)+ \sum_{i=1}^e (3r_i-2) \\
  & = 3N+a-d-2e.
\end{align*}
Or equivalently, $N= 6+ \dfrac{-a+d+2e}{3}$. If $N\leq 5$, then
$a\geq d+2e+3$, and we would have
\begin{equation*}
  3\geq 2a+b+c-e\geq 2(d+2e+3)+b+c-e =b+c+2d+3e+6\geq 6.
\end{equation*}
Therefore, $N\geq 6$; and hence by Proposition~\ref{p2.5}, $N=6$ and
$M=9$. Furthermore, we have $(S,g)\simeq (S_3,g_3)$.

\medskip

Step 6. Determine the Dynkin's type of $\Delta$.

Solving the system
\begin{equation*}
  d+2e=a\quad \textrm{and}\quad 2a+b+c-e\leq 3,
\end{equation*}
we have $13$ nonnegative integer solutions. So there are $13$ types
of $\Delta$ as listed  in Proposition~\ref{p2.11}.
\end{proof}

To be more precise, we list all the $48$ possible types of $\Delta$
in Table~\ref{table2.1} in Section~\ref{sec2.5}. Note that in Step 3 and 4, we proved that each irreducible curve in $\Delta$ $g$-stable, and the action of $g$ on $\Delta$ is uniquely determined, which is also included in Table~\ref{table2.1}. The case $I=3$ for Main Theorem (5) is proved.

\medskip

If $\Delta$ can be obtained
from the $24$ $g$-stable rational curves in $S_3$
(Figure~\ref{fig2.1}) which contains the $6$ $g$-fixed curves and satisfies the condition in the proof of Proposition~\ref{p2.11} Step~4, let $S_3\to \bar S$ be the contraction of
$\Delta$, then the automorphism $g_3$ on $S_3$ induces an
automorphism on $\bar S$. We see that $Z=\bar S/\langle g_3\rangle$
is a required log Enriques surface of type $\Delta$. By
verification, 40 cases are realizable. The detailed list is given in
Table~\ref{fig2.1}(A). Thus, we have completed the proof of Main Theorem~(3).

Unfortunately, the remaining 8 cases are not realizable by the the
24 curves on $S_3$, which are given in Table~\ref{fig2.1}~(B). We
are unable to determine their realizability.


\section*{4.2. Classification When $I=2$}\label{sec2.4.2}

Let $(S,g)$ be a pair of a smooth K3 surface $S$ and an
automorphism $g$ of $S$. We assume that $g^*\omega_S= -\omega_S$ for a nowhere vanishing holomorphic
$2$-form $\omega_S$ on $S$.

If $P\in S$ is an isolated $g$-fixed point, then $g^*$ can be
written as $\diag(-1,-1)$ under some appropriate local coordinates
around $P$. However, this contradicts the assumption that $g^*\omega_S=-\omega_S$. So $S$ has no isolated $g$-fixed point. Let $C$ be a $g$-fixed irreducible curve and let $Q\in C$. Then
$g^*$ can be written as $\diag(1,-1)$ under some appropriate local
coordinates around $Q$. So the $g$-fixed curves are smooth and
mutually disjoint.

We need to use the following lemma in the classification.

\begin{Lemma}[{``Two Go'' Lemma, \cite[Lemma~3.2]{oguiso1996most}}]\label{l2.12}
Let $(S,g)$ be a pair of smooth K3 surface and an automorphism $g$ of $S$. Assume that $g^2=\id$ and $g^*\omega_S=-\omega_S$.
\begin{enumerate}[\quad \rm 1)]
    \itemsep=1mm
    \item If $C_1-C_2$ is a linear chain of $g$-stable smooth rational curves, then exactly one of $C_i$ is $g$-fixed.
    \item If $C_1$ and $C_2$ are $g$-stable but not $g$-fixed smooth rational curves, then $C_1\cdot C_2$ is even.
    \item If $C$ is a $g$-stable but not $g$-fixed smooth rational curve, then $C$ has exactly 2 $g$-fixed
    points.
  \end{enumerate}
\end{Lemma}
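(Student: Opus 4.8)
The plan is to reduce everything to a local analysis at the $g$-fixed points, using the structural facts about non-symplectic involutions recorded just above: since $g^*\omega_S=-\omega_S$, at any $P\in S^g$ the action of $g^*$ on $T_PS$ is forced to be $\diag(1,-1)$ (its determinant is $-1$ and its square is the identity), so $S^g$ has no isolated point, and, linearizing the $\mathbb{Z}/2$-action by averaging, we may choose local coordinates $(x,y)$ at $P$ with $g^*(x,y)=(x,-y)$ and the germ of $S^g$ at $P$ equal to $\{y=0\}$.

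The key step is a local dichotomy for a smooth $g$-stable curve germ $C$ through such a point $P$: since $g$ preserves $C$ it preserves $T_PC$, which is therefore one of the two eigenlines $\{y=0\}$, $\{x=0\}$ of $g^*$ on $T_PS$. If $T_PC=\{y=0\}$, write $C=\{y=f(x)\}$ with $f=O(x^2)$; $g$-stability gives $-f(x)=f(x)$, so $f\equiv 0$, $C$ coincides with the fixed germ $\{y=0\}$, and $C$ is $g$-fixed. If instead $T_PC=\{x=0\}$, write $C=\{x=h(y)\}$ with $h=O(y^2)$; $g$-stability forces $h$ to be an even series $h(y)=\phi(y^2)$, and $g$ acts on $T_PC$ by $-1$, so $g|_C$ is a nontrivial involution near $P$ and $C$ is not $g$-fixed. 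In particular, through a $g$-fixed point a $g$-stable curve is $g$-fixed if and only if it is tangent to the $(+1)$-eigenline, and any two $g$-stable non-$g$-fixed curves through $P$ are mutually tangent there (both being tangent to the single $(-1)$-eigenline).

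Granting this, the three assertions follow quickly. For (1): a linear chain $C_1-C_2$ means $C_1\cdot C_2=1$, so $C_1$ and $C_2$ meet transversally at a single point $P$, which is hence $g$-fixed; the distinct tangent lines $T_PC_1\neq T_PC_2$ exhaust the two eigenlines of $g^*$, so by the dichotomy exactly the one tangent to $\{y=0\}$ is $g$-fixed. For (3): $C\cong\mathbb{P}^1$ is $g$-stable and not $g$-fixed, so $g|_C$ is an automorphism of $\mathbb{P}^1$ of order exactly $2$, and every such automorphism has precisely two fixed points. For (2): $g$ acts on the finite set $C_1\cap C_2$ (here $C_1\neq C_2$) and preserves local intersection multiplicities, so the $g$-orbits of size $2$ contribute an even number to $C_1\cdot C_2$; at a point $P\in C_1\cap C_2$ fixed by $g$, neither $C_i$ is $g$-fixed, so both are given near $P$ by $x=\phi_i(y^2)$, and the local intersection number equals $\ord_y\bigl(\phi_1(y^2)-\phi_2(y^2)\bigr)=2\,\ord_u\bigl(\phi_1(u)-\phi_2(u)\bigr)$, which is finite (as $C_1\neq C_2$) and even. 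Summing the contributions, $C_1\cdot C_2$ is even.

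I expect the main obstacle to be the bookkeeping in (2) — in particular, observing that two $g$-stable, non-$g$-fixed curves must be tangent at any common $g$-fixed point, and then computing the local intersection multiplicity in the even coordinate $y^2$. Once the linearized normal form of $g$ at a fixed point and the tangent-line dichotomy are established, the rest is formal; the only genuinely global inputs are that $g$ is non-symplectic and the consequent absence of isolated $g$-fixed points, both of which are already at our disposal.
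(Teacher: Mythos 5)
Your proof is correct. The paper itself gives no proof of this lemma (it is quoted from Oguiso--Zhang), but your argument — linearize the involution at a fixed point to $\diag(1,-1)$, observe that a $g$-stable smooth curve through such a point is $g$-fixed iff tangent to the $(+1)$-eigenline, and deduce (1) from transversality, (3) from the classification of involutions of $\mathbb{P}^1$, and (2) from the evenness of the local intersection numbers at fixed points plus the pairing of non-fixed intersection points — is exactly the standard route, consistent with the local normal forms the paper records immediately before stating the lemma.
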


Suppose $I(Z)=2$. Then the associated pair satisfies the conditions in Lemma~\ref{l2.12}. We can now determine the possible Dynkin's types of $(S,g)$.

\begin{Proposition}\label{p2.13}
  With the notations as in Main Theorem. Suppose $I=2$. Then $(S,g)\simeq (S_2,g_2)$, the Shioda-Inose's pair of discriminant $4$. Moreover, $\Delta$ is of the type $A_{2m-1}\oplus A_{2n-1}$, where $m+n=10$.
\end{Proposition}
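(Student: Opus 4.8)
The argument should run closely parallel to the proof of Proposition~\ref{p2.11}, but using the ``Two Go'' Lemma~\ref{l2.12} in place of the ``Three Go'' Lemma. First I would show that each connected component $\Delta_i$ of $\Delta$ is $g$-stable: if not, its image in $Z$ is a Du Val singularity (since $I=2$ is prime), contradicting our standing assumption. Next I would rule out components of type $D_n$ and $E_n$. For a $D_n$ or $E_n$ component, the center $C$ is $g$-stable by uniqueness; the three curves meeting $C$ cannot all be permuted nontrivially since $g^2=\id$, and analysing the twigs with Lemma~\ref{l2.12}(1) (each length-two chain of $g$-stable curves contains exactly one $g$-fixed curve, and $g$-fixed curves are disjoint) forces a length-one twig ending in a $g$-stable non-fixed curve meeting no $g$-fixed curve. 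But a $g$-stable non-fixed rational curve $C'$ satisfies $C_1-C_2$ analysis: $C'$ meets its neighbour in the twig in exactly one point, and by Lemma~\ref{l2.12}(1) one of the two is $g$-fixed, contradiction when we propagate along the twig. So every component is of type $A_n$.

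**Chain structure and $g$-action.** For $\Delta_i = A_n$, written as a chain $C_1 - C_2 - \cdots - C_n$, each $C_j$ is $g$-stable: for $n>1$ a nontrivial $g$-action would have to reverse the chain, $g(C_1)=C_n$, but then the middle behaviour again contradicts $g^2=\id$ unless $n$ is handled, and in any case a fixed-point analysis on the reflected chain is incompatible with Lemma~\ref{l2.12}. Given that all $C_j$ are $g$-stable, Lemma~\ref{l2.12}(1) applied successively to the length-two subchains forces the pattern to alternate: $f, s, f, s, \ldots$ (or its reverse), so $C_j$ is $g$-fixed iff $j$ is odd, which in particular forces $n$ to be \emph{odd}, say $n = 2k-1$, with $k$ $g$-fixed curves and $k-1$ $g$-stable non-fixed curves in that component. (If $n$ were even the two end curves would be forced into incompatible $f/s$ labels.)

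**Counting and identification with $(S_2,g_2)$.** Now decompose $\Delta = \bigoplus_{i=1}^{a} A_{2k_i-1}$, so $\rank\Delta = 18 = \sum_i (2k_i - 1) = 2\sum_i k_i - a$; the number of $g$-fixed rational curves is $N = \sum_i k_i$, giving $a = 2N - 18$. Since $a \geq 0$ we get $N \geq 9$; I would then argue $N \geq 10$ to invoke Proposition~\ref{p2.8}. The extra push comes from the global structure: $\Delta \subsetneq \rho(S) - 1$ cannot hold if $a=0$ (that would need $N=9$ but then $\Delta = A_{18}$ with $\rho(S)=19$, and I must check whether the resulting $(S,g)$ can have only $9$ fixed curves — Proposition~\ref{p2.8} says any such $(S,g)$ with $\geq 10$ fixed curves is $(S_2,g_2)$, which has exactly $10$; so I need to exclude the borderline $a=0$ case by a direct count of $S^g$ including fixed curves outside $\Delta$). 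Actually the cleanest route: any $g$-fixed curve in $S$ lies in $S^{[\langle g\rangle]} \subseteq \Delta$, so $N$ is exactly the number of $g$-fixed curves of $S$; Lemma~\ref{l2.12} together with the K3 holomorphic Lefschetz / Hodge-index bookkeeping (as in \cite{oguiso1996most}) shows a genus-$2$-free involution with $g^*\omega = -\omega$ on a K3 has at least... — but since Proposition~\ref{p2.8} is already available, I just need $N\geq 10$, equivalently $a \geq 2$. Suppose $a = 1$: then $\Delta = A_{18}$, wait that gives $a=1$, $N = (18+1)/2$? No: $a=1 \Rightarrow 2k_1 - 1 = 18$, impossible since $18$ is even. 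So $a=1$ cannot occur, and $a=0$ is impossible (empty $\Delta$ has rank $0$). Hence $a \geq 2$, so $N = (18+a)/2 \geq 10$. By Proposition~\ref{p2.8}, $(S,g) \simeq (S_2, g_2)$ and $N = 10$, forcing $a = 2$. Therefore $\Delta = A_{2m-1} \oplus A_{2n-1}$ with $(2m-1)+(2n-1) = 18$, i.e.\ $m + n = 10$.

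**Main obstacle.** The delicate point is Step~2, excluding $D_n$ (and $E_n$), because unlike the $I=3$ case the ``Two Go'' Lemma gives no three-way obstruction at a branch point directly — one has to combine the disjointness of $g$-fixed curves, the alternation forced along each twig, and the parity constraint at the trivalent center to derive a contradiction; I expect this case analysis on the $D_4$ and $D_5$ (small-$n$) sub-cases to require the most care. The counting in Steps 5--6 is then essentially forced once the small observation $a\neq 1$ (parity) is in hand.
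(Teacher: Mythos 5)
Your proposal follows the paper's proof essentially step for step: $g$-stability of the components from primality of $I$, exclusion of $D_n$/$E_n$ via Lemma~\ref{l2.12} together with the absence of isolated $g$-fixed points, the forced alternation $f$-$s$-$f$-$\cdots$-$f$ along each $A_n$ chain (so $n$ is odd), and the count $N=(18+r)/2\geq 10$ feeding into Proposition~\ref{p2.8}. One small correction: reversing an $A_n$ chain does not by itself ``contradict $g^2=\id$'' (a reversal is an involution); the paper rules it out precisely by the fixed-point analysis you fall back on --- for $n$ even the midpoint $C_k\cap C_{k+1}$ would be an isolated $g$-fixed point, and for $n$ odd the $g$-stable middle curve would require a $g$-fixed neighbour that the component cannot contain.
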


\begin{proof}
   Since $I=2$ is a prime,
   each connected component $\Delta_i$ of $\Delta$ must be $g$-stable because $Z$ is assumed to have no Du Val singular points.

   \medskip

   Step 1. $\Delta_i=A_n$.

   Suppose $\Delta_i=D_n$ or $E_n$. Let $C$ be the center of $\Delta_i$. Then $C$ meets exactly $3$ smooth rational curves in $\Delta_i$, say $C_1,C_2,C_3$. By uniqueness, $C$ is $g$-stable, and $g(\{C_1,C_2,C_3\})=\{C_1,C_2,C_3\}$.

   If every $C_j$ is $g$-stable, then $C$ has at least $3$ $g$-fixed points, and it is $g$-fixed. Hence, $C_j$ are not $g$-fixed. On the other hand, each $C_j$ contains two $g$-fixed points, and one of them is not in $C$.  There would be another $g$-fixed curve $C_j'$ in $\Delta_i$ which intersects $C_j$, $j=1,2,3$, a contradiction. Suppose $C_1$ is not $g$-stable, say $g(C_1)=C_2$. Then $g(C_2)=C_1$ and $C$ is not $g$-fixed. Since $C_3$ is $g$-stable, by Lemma~\ref{l2.12} it is also $g$-fixed. However, one of the two $g$-fixed points on $C$ is not contained in $C_3$, so $C$ should intersect with another $g$-fixed curve in $\Delta_i$, a contradiction again.

   Therefore, we can express $\Delta_i=A_n$ as a linear chain of smooth rational curves: $C_1-C_2-\cdots-C_n$.

   \medskip

   Step 2. Each $C_j$ is $g$-stable.

   Suppose $g(C_1)\neq C_1$.  Then $g(C_1)=C_n$, and consequently $g(C_j)=C_{n-j}$ for all $j$. There are two cases.

i) If $m=2k$, let $\{P\}=C_k\cap C_{k+1}$, then $P$ would be an
isolated $g$-fixed point, absurd.

ii) If $m=2k-1$, then $C_k$ is $g$-stable, and there would be a $g$-fixed curve which intersects $C_k$. But $\Delta_i$ contains no $g$-fixed curve, a contradiction.

Therefore, $g(C_1)=C_1$ and it follows that each $C_j$ is $g$-stable.

\medskip

Step 3. $\Delta_i=A_{2m-1}$.

Note that each $g$-stable but not $g$-fixed curve must intersect
$g$-fixed curves at two points. So $C_1$ is $g$-fixed and $C_2$ is
not. Consequently, each $C_{2j-1}$ is $g$-fixed and $C_{2j}$ is not.
With the same reason, $C_n$ must be $g$-fixed. So $n$ is odd.
Therefore, $\Delta_i=A_n$ has the form
\begin{equation*}
\begin{psmatrix}[rowsep=3mm,colsep=5mm,linewidth=0.5pt, nodesep=1mm]
  f & s & f & s& f & \cdots & f & s & f
  \ncline{1,1}{1,2}
  \ncline{1,2}{1,3}
  \ncline{1,3}{1,4}
  \ncline{1,4}{1,5}
  \ncline{1,5}{1,6}
  \ncline{1,6}{1,7}
  \ncline{1,7}{1,8}
  \ncline{1,8}{1,9}
\end{psmatrix}
\end{equation*}
where ``$f$'' denotes the $g$-fixed curves and ``$s$'' denotes the $g$-stable but not $g$-fixed curves in $\Delta_i$.

\medskip

Step 4. Determine the Dynkin type of $\Delta$.

Decompose $\Delta= \bigoplus_{i=1}^r A_{2n_i-1}$. Recall that every
smooth rational $g$-fixed curve in $S$ is contained in $\Delta$. Let
$N$ be the number of smooth rational $g$-fixed curves in $S$.  Then
$N=\sum_{i=1}^r n_i$ and
\begin{equation*}
  18= \rank \Delta= \sum_{i=1}^r (2n_i-1)= 2N-r.
\end{equation*}
So we have
\begin{equation*}
  N= \dfrac{18+r}{2}>9.
\end{equation*}
Then $N\geq 10$. It follows from Proposition~\ref{p2.8} that $N=10$
and $(S,g)\simeq (S_2,g_2)$. Moreover, $r=2$. This completes the
proof.
\end{proof}

We have the following configurations for $\Delta$:
\begin{equation*}
  A_1\oplus A_{17},\quad A_3\oplus A_{15},\quad A_5\oplus A_{13},\quad A_7\oplus A_{11},\quad A_9\oplus A_9.
\end{equation*}

Similarly as in the case when $I=3$, if $S_2^g\subseteq \Delta$ and
the divisor $\Delta$ can  be obtained from the $24$ smooth rational
curves in $S_2$ (Figure~\ref{fig2.2}) which satisfies the conditions in the proof of Proposition~\ref{p2.13} Step~3, let $S_2\to \bar S$ be the
contraction of $\Delta$, then the automorphism $g_2$ on $S_2$
induces an automorphism on $\bar S$, and $Z:=\bar S/\langle
g_2\rangle$ is a required log Enriques surface of Dynkin's type
$\Delta$.

We can easily verify that these $5$ cases are all realizable (cf.\
Table~\ref{fig2.2}). We have proved Main Theorem (2). By noting the results in Step 2 and 3 in the proof of Proposition~\ref{p2.13}, Main Theorem (5) for case $I=2$ is also proved.

\section*{4.3. Classification When $I=4$}\label{sec2.4.3}

Let $(S,g)$ be a pair of a smooth K3 surface $S$ and an automorphism $g$ of $S$. Assume that $g^4=\id$ and
$g^*\omega_S=i\omega_S$ for a nowhere vanishing holomorphic $2$-form on $S$, where $i=\sqrt{-1}$. Let $P$ be an isolated $g$-fixed point. Then $g^*$ can be written as $\diag(-1,-i)$ near $P$ with appropriate coordinates. Let $C$ be a $g$-fixed irreducible curve and $Q$ a point in $C$. Then $g^*$ can be written as $\diag(1,i)$ near $Q$ with appropriate coordinates.


Similarly as in the case $I=2$ (Lemma~\ref{l2.12}) or $I=3$
(Lemma~\ref{l2.10}), we can state and prove the following lemma.

\begin{Lemma}[``Four Go'' Lemma]\label{l2.14}
  Let $(S,g)$ be a pair of smooth K3 surface $S$ and an automorphism $g$ of $S$. Assume that $g^4=\id$ and $g^*\omega_S=i\omega_S$.
  \begin{enumerate}[\quad \rm 1)]\itemsep=1mm
    \item Let $C_1-C_2-C_3-C_4$ be a chain of $g$-stable smooth rational curves. Then exactly one of $C_j$ is $g$-fixed, and exactly one of $C_k$ is $g^2$-fixed but not $g$-fixed. Moreover, $\{j,k\}=\{1,3\}$ or $\{2,4\}$.
    \item Let $C$ be a $g$-stable but not $h$-fixed smooth rational curve on $S$.
    Then there exists a unique $g$-fixed curve $D_1$ and a unique $g^2$-fixed but not $g$-fixed curve $D_2$
    such that $C\cdot D_1=C\cdot D_2=1$.
    \item Let $M$ and $N$ be the number of smooth rational curves
    and the number of isolated points in $S^g$, respectively. Then
    $M-2N=4$.
  \end{enumerate}
\end{Lemma}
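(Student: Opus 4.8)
The plan is to imitate closely the proofs of the "Two Go" and "Three Go" lemmas, the key new feature being that $g$ has order $4$, so we must keep track of the intermediate involution $h:=g^2$ and exploit the relation between the $g$-action and the $h$-action. First I would record the local picture. Since $g^*\omega_S=i\omega_S$, at an isolated $g$-fixed point $P$ the action $g^*$ on the cotangent space is (up to coordinates) $\diag(-1,-i)$ (the product of eigenvalues must be $i$ and neither may be $1$, else $P$ lies on a fixed curve); note $h^*=\diag(1,-1)$ there, so $P$ lies on an $h$-fixed curve. Along a $g$-fixed curve $C$, $g^*=\diag(1,i)$ near each point, so $h^*=\diag(1,-1)$ there, i.e.\ $C$ is also $h$-fixed; and along a curve $C$ that is $g^2$-fixed but not $g$-fixed, $h$ acts trivially while $g$ acts on $C$ as an order-$2$ automorphism with exactly two fixed points on $C$. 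In particular $S^g\subseteq S^h$, and by the "Two Go" lemma applied to the involution $h$ (whose $\omega$-eigenvalue is $(i)^2=-1$) we get the structural facts: in a chain of $h$-stable rational curves consecutive curves alternate $h$-fixed / not $h$-fixed, an $h$-stable non-$h$-fixed curve meets an $h$-fixed curve at two points, etc.

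\emph{Part (1).} Given a chain $C_1-C_2-C_3-C_4$ of $g$-stable curves, each $C_j$ is in particular $h$-stable, so by the "Two Go" lemma exactly one of $C_1,C_2$ and exactly one of $C_3,C_4$ is $h$-fixed, and these two $h$-fixed ones are non-adjacent, forcing the $h$-fixed curves among the chain to be $\{C_1,C_3\}$ or $\{C_2,C_4\}$. Now analyze $g$ within each such $h$-fixed/non-$h$-fixed alternation. Say the $h$-fixed curves are $C_1,C_3$ (the other case is symmetric). The non-$h$-fixed curve $C_2$ is $g$-stable, so $g|_{C_2}$ is an order-$2$ map (since $h|_{C_2}=\id$ but $C_2\not\subseteq S^g$) with exactly two fixed points; these are $C_1\cap C_2$ and $C_3\cap C_2$, and $g$ fixes them, so $g$ fixes those points, hence $g$ acts on the tangent line to $C_1$ at $C_1\cap C_2$, etc. Use the eigenvalue bookkeeping at the node $C_1\cap C_2$: there $g^*$ has eigenvalues $(\lambda,\mu)$ with $\lambda\mu=i$, where (say) $\mu$ is the eigenvalue along $C_2$ and equals $-1$ (order-$2$ on $C_2$), forcing $\lambda=-i$ along $C_1$ — but $C_1$ is claimed $g$-fixed iff $\lambda=1$. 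This is the heart of the argument: a careful node-by-node propagation of the eigenvalue $i$ along the chain shows that exactly one of $C_1,C_3$ is genuinely $g$-fixed (eigenvalue $1$ transverse to it) while the other is only $g^2$-fixed, i.e.\ $g$ acts on it by $-1$ in the normal direction; and this fixes the pattern to $\{j,k\}=\{1,3\}$ in this case, $\{2,4\}$ in the other. I expect this propagation — correctly matching local eigenvalues across the three nodes of the chain and ruling out the "both $g$-fixed" or "neither $g$-fixed" configurations by the product-$=i$ constraint together with $g^4=\id$ — to be the main obstacle, since one must be careful that a curve's being $g^2$-fixed-not-$g$-fixed is compatible with its two neighbors' statuses.

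\emph{Part (2).} Let $C$ be $g$-stable, not $g^2$-fixed (hence not $g$-fixed). In particular $C$ is $h$-stable and not $h$-fixed, so by the "Two Go" lemma applied to $h$ there is an $h$-fixed curve $D$ with $C\cdot D=1$; I must upgrade and refine this. Near the point $C\cap D$, write $g^*=\diag(\lambda,\mu)$ with $\lambda$ the eigenvalue along $D$, $\mu$ along $C$, $\lambda\mu=i$; since $C$ is not $g^2$-fixed, $\mu^2\neq1$, so $\mu\in\{i,-i\}$, hence $\lambda\in\{-i,... \}$ accordingly — I will argue $\lambda\in\{1,-1\}$, so $D$ is at least $h$-fixed, and $D$ is $g$-fixed iff $\lambda=1$. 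Using $h=g^2$, $h^*=\diag(\lambda^2,\mu^2)=\diag(\lambda^2,1)$, consistent. To produce \emph{two distinct} curves $D_1$ (genuinely $g$-fixed) and $D_2$ ($g^2$-fixed not $g$-fixed) each meeting $C$ transversally once: note $C\setminus S^g$ is a union of two disks, and on $C$ the involution $g$ has two fixed points $Q_1,Q_2$; at $Q_r$, the local $g$-action transverse to $C$ has some eigenvalue $\lambda_r$ with $\lambda_r\cdot(-1)=i$... wait, the tangential eigenvalue at a fixed point of an order-$2$ action on $C$ is $-1$, so $\lambda_r=-i$ at both — that would make \emph{neither} of the two transverse curves $g$-fixed, contradiction, so in fact one of $Q_1,Q_2$ is an \emph{isolated} $g$-fixed point (no curve through it) and the other meets a fixed curve, or: reconsider — the correct dichotomy is that through $Q_r$ there may or may not pass a $g$-stable curve, and a finer local analysis (the one parallelling Lemma~\ref{l2.10}(2) and Lemma~\ref{l2.12}(3)) yields exactly one $g$-fixed curve $D_1$ and exactly one $g^2$-fixed-not-$g$-fixed curve $D_2$ crossing $C$. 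I will extract $D_1$ and $D_2$ by this local computation at the two $g$-fixed points of $C$, and uniqueness from the fact that $C$ being a smooth rational curve carrying an order-$2$ map has precisely these two fixed points. This step will need the global input that $S^g\subseteq\Delta$ (every such curve is exceptional), but that is part of the setup.

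\emph{Part (3).} This is the numerical/topological part, obtained from holomorphic Lefschetz or from the known formula for order-$4$ non-symplectic automorphisms, analogous to how $M-N=3$ for order $3$ and the "Two Go" count for order $2$ were derived. Write the holomorphic Lefschetz fixed-point formula for $g$ on the K3 surface $S$: $\sum_i (-1)^i \operatorname{tr}(g^*|H^i(S,\mathcal O_S)) = \sum_{\text{fixed components}} (\text{local contributions})$. The left side is $1+\overline{\operatorname{tr}(g^*|H^2(\mathcal O_S))}=1+\overline{i}=1-i$ (using $H^0=\mathbb C$, $H^1=0$, $H^2\cong\mathbb C\omega_S$ with $g^*=i$). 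On the right, each isolated $g$-fixed point $P$ with $g^*=\diag(-1,-i)$ contributes $\tfrac1{(1-(-1))(1-(-i))}=\tfrac1{2(1+i)}$; each $g$-fixed rational curve $C$ (with normal eigenvalue $i$, and $C\cong\mathbb P^1$ so $\chi(\mathcal O_C)=1$) contributes $\tfrac1{1-i}$ after the standard curve-contribution formula; and — crucially — I must also include the curves that are $g^2$-fixed but not $g$-fixed, on which $g$ acts as an involution with two fixed points, and handle their contribution, or instead apply the formula to $g$ and to $g^3=g^{-1}$ and combine. Equating real and imaginary parts gives two linear relations among $M$, $N$, the number $M'$ of $g^2$-fixed-not-$g$-fixed curves, and the number $N'$ of the corresponding isolated points; together with the relation coming from Lemma~\ref{l2.12} applied to $h=g^2$ (which controls $S^h = S^g \sqcup (\text{the } g^2\text{-only stuff})$), this will collapse to $M-2N=4$. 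The potential obstacle here is purely one of bookkeeping: correctly enumerating and weighting the contributions of the three kinds of fixed loci of $g$ (isolated points, $g$-fixed curves, and the $g$-action on $g^2$-fixed curves), but it is a finite computation with no conceptual difficulty once the local models at the start of this subsection are in hand.
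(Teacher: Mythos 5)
Your overall strategy (reduce to the ``Two Go'' Lemma for $h=g^2$, then do a local eigenvalue analysis at the nodes with the constraint that the two eigenvalues of $g^*$ at a fixed point multiply to $i$, and finally use holomorphic Lefschetz for the count) is exactly the paper's. But parts (1) and (2) rest on a concrete error that you notice produces contradictions and then do not resolve: you repeatedly treat $g$ restricted to a $g$-stable but \emph{not $h$-fixed} curve $C_2$ as an involution, writing ``$g|_{C_2}$ is an order-$2$ map (since $h|_{C_2}=\id$)''. That premise is false: $C_2$ not being $h$-fixed means precisely that $h|_{C_2}\neq\id$, so $g|_{C_2}$ has order $4$ on $C_2\simeq\mathbb P^1$. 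Consequently its tangential eigenvalues at its two fixed points are $i$ and $-i$ (reciprocal primitive fourth roots of unity), not $-1$ and $-1$. This is the whole point: with tangential eigenvalue $i$ at one node the transverse eigenvalue is $i/i=1$ (a genuinely $g$-fixed neighbour), and with tangential eigenvalue $-i$ at the other node the transverse eigenvalue is $i/(-i)=-1$ (a neighbour that is only $h$-fixed). That immediately gives both the $\{1,3\}$-versus-$\{2,4\}$ dichotomy in (1) and the existence and uniqueness of $D_1$ and $D_2$ in (2), via the smoothness of $S^g$ and $S^h$ near the two fixed points of $g|_C$. Your ``$\lambda=-i$ along $C_1$'' and ``$\lambda_r=-i$ at both points'' are artifacts of the wrong order, and the ensuing ``reconsider \dots a finer local analysis \dots yields exactly one'' is an assertion of the conclusion, not a proof.

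Part (3) is essentially correct and matches the paper: the left side of the holomorphic Lefschetz formula is $1-i$, an isolated fixed point of type $\diag(-1,-i)$ contributes $\tfrac{1-i}{4}$, and a $g$-fixed $(-2)$-curve contributes $-\tfrac{1-i}{2}$ (you quote only the genus term $\tfrac{1}{1-i}$; the self-intersection term must be included). Your worry about separately accounting for $h$-fixed-but-not-$g$-fixed curves is unnecessary: only $S^g$ enters the formula for $g$, and such a curve meets $S^g$ in two points of type $\diag(-1,-i)$ which are already counted among the isolated fixed points; the single identity $1-i=\tfrac{M}{4}(1-i)-\tfrac{N}{2}(1-i)$ then gives $M-2N=4$ with no auxiliary relations needed.
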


\begin{proof}
  1) Applying Lemma~\ref{l2.12} to $h:=g^2$, we may assume that
  $C_1,C_3$ are $h$-fixed and $C_2,C_4$ are not. Note that $\{P\}=C_1\cap C_2$ and $\{Q\}=C_2\cap
  C_3$ are $g$-fixed. The action of $g$ on the
  tangent space $T_{C_2,P}$ of $C_2$ at $P$ is the
  multiplicative of $i$ or $-i$, and the action of $g$ on
  $T_{C_2,Q}$ is the multiplicative of $-i$ or $i$, respectively.
  For the first case, $C_1$ is $g$-fixed and $C_3$ not; and
  conversely for the second case.

  2) Let $P$ and $Q$ be the $g$-fixed points on $C$. Then the
  actions of $g$ on $T_{C,P}$ and $T_{C,Q}$ are the multiplication
  of $i$ and $-i$, respectively. So there is a unique $g$-fixed curve
  passing through $P$ and a unique $h$-fixed but not $g$-fixed curve
  passing through $Q$.

  3) We can write
\begin{equation*}
  S^g = \{P_1\}\coprod \cdots \coprod \{P_M\} \coprod C_1 \coprod \cdots \coprod C_N,
\end{equation*}
where $P_j$ are the isolated $g$-fixed points, and $C_k$ are the
smooth irreducible rational  $g$-fixed curves of $S$. Consider the
holomorphic Lefschetz number $L(g)$, which can be evaluated in two
different ways.

Method 1. $L(g)= \displaystyle \sum_{i=0}^2 (-1)^i
\tr(g^*|_{H^i(S,\mathcal O_S)})$ (cf.~\cite[\S3]{atiyah1968segal}).

We see that $H^0(S,\mathcal O_S)\simeq \mathbb C$, $H^1(S, \mathcal O_S)=0$, and by Serre duality
\begin{equation*}
  H^2(S, \mathcal O_S)\simeq H^0(S, \mathcal O_S(K_S))^\vee = H^0(S, \mathcal O_S)^\vee.
\end{equation*}
Then $g^*|_{H^0(S,\mathcal O_S)}=\id$, $g^*|_{H^1(S, \mathcal
O_S)}=0$  and $g^*|_{H^2(S,\mathcal O_S)}= i^{-1}= -i$.

Method 2. $L(g)= \displaystyle \sum_{j=1}^M a(P_j)+ \sum_{k=1}^N
b(C_k)$.
\begin{align*}
  a(P_j): & = \dfrac{1}{\det(1-g^*|_{T_{P_j}})},\\
  b(C_k): & =
  \dfrac{1-\pi(C_k)}{1-\lambda_k^{-1}}-\dfrac{\lambda_k^{-1}}{(1-\lambda_k^{-1})^2}(C_k)^2,
\end{align*}
where $\pi(C_k)$ is the genus and $(C_k)^2$ is the self-intersection
number of $C_k$, and $\lambda_k$ is the eigenvalue of $g^*$ on the
normal bundle of $C_k$ (cf.~\cite[\S4]{atiyah1968singer}).

Recall that $g^*|_{T_{P_j}}= \diag(-1,-i)$. Then
\begin{equation*}
  a(P_j)=\dfrac{1}{(1+1)(1+i)}= \dfrac{1-i}{4}.
\end{equation*}
Since $g^*|_{T_{Q_k}}= \diag(1,i)$ near $Q_k\in C_k$,
$\lambda_k=i^{-1}$ is the eigenvalue of $g^*$ on the normal bundle. So
\begin{equation*}
  b(C_k)  = \dfrac{1-0}{1-i}- \dfrac{i}{(1-i)^2}(-2) = -\dfrac{1-i}{2}.
\end{equation*}
Therefore, $1-i=\dfrac{M}{4}(1-i)- \dfrac{N}{2}(1-i)$; that is,
$M-2N=4$.
\end{proof}

\medskip

Now suppose $I(Z)=4$. Then the associated pair $(S,g)$ satisfies the conditions in Lemma~\ref{l2.9} and \ref{l2.14}. Set $h:=g^2$. First of all, we claim that
\begin{Lemma}\label{l2.15}
  With the notations as in Main Theorem and above, each connected component $\Delta_i$ of $\Delta$ is $h$-stable.
\end{Lemma}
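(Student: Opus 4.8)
The plan is to argue by contradiction, assuming some connected component $\Delta_i$ of $\Delta$ is not $h$-stable, and to derive a contradiction with the hypothesis that $Z$ has no Du Val singularities. The key observation is that $g$ permutes the connected components of $\Delta$, and since $g^4 = \id$, any $g$-orbit of components has length $1$, $2$, or $4$. If $\Delta_i$ is not $h$-stable, then $h = g^2$ moves $\Delta_i$ to a different component, so the $g$-orbit of $\Delta_i$ has length $4$: it consists of four distinct components $\Delta_i, g(\Delta_i), g^2(\Delta_i), g^3(\Delta_i)$, with $g$ cycling them. In particular $g$ does not stabilize $\Delta_i$.

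Next I would analyze what this means downstairs on $Z$. The image of the orbit $\Delta_i \cup g(\Delta_i) \cup g^2(\Delta_i) \cup g^3(\Delta_i)$ in $\bar S$ is a single connected curve (since $\nu$ is the minimal resolution and contracts each component of $\Delta$), and its image in $Z = \bar S/\langle g\rangle$ is a single point $P$, which is a quotient singular point of $Z$. The point here is to show $P$ is a Du Val singularity, contradicting our standing assumption. Since the four components are permuted cyclically and freely by $\langle g\rangle$, the stabilizer of a point on $\Delta_i$ in $\langle g\rangle$ is contained in $\langle h\rangle = \langle g^2\rangle$; and since the canonical cover $\bar S \to Z$ is étale in codimension one, the local picture at $P$ is the quotient of a Du Val singularity (or a smooth point) of $\bar S$ by a cyclic group acting freely in codimension one — more precisely, the contribution of these components to $\Delta$ assembles, after contraction, to an exceptional configuration whose image in $Z$ has trivial discrepancy.

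The cleanest way to make this rigorous is to compare with the index: the local fundamental group at $P$ and the way $g$ acts force the corresponding singularity of $Z$ to have index dividing $4$ but also, because $g$ acts freely on the four-cycle of components, the canonical cover unwinds the singularity without ramification along the exceptional locus — hence $\pi$ is étale over a punctured neighborhood of $P$, and $\bar S$ being smooth or Du Val near the preimage means $P$ itself is a quotient singularity whose minimal resolution has a trivial canonical class contribution, i.e.\ a Du Val point. This is precisely the argument used in Step 1 of the proofs of Proposition~\ref{p2.11} and Proposition~\ref{p2.13}, except that there $I$ was prime so a single application sufficed; here one applies the same reasoning to the subgroup $\langle h\rangle$ of index $2$.

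The main obstacle I anticipate is handling the case distinction carefully: it is conceivable a priori that $g$ stabilizes $\Delta_i$ as a set but $h$ does not — but this is impossible since $h(\Delta_i) \subseteq g(\Delta_i)$... actually no, $h(\Delta_i)$ need not equal $g(\Delta_i)$; one must rule out configurations where $g$ acts on $\Delta_i$ by an order-$2$ automorphism while still having $h(\Delta_i) = \Delta_i$ — but that would make $\Delta_i$ $h$-stable, which is what we want, so the only genuinely bad case is the length-$4$ orbit, as described. The real work is confirming that a length-$4$ orbit of components maps to a Du Val point of $Z$; I expect this to follow from the classification of quotient singularities and the étale-in-codimension-one property of $\pi$, exactly mirroring the $I=2,3$ arguments, so it should be a short deduction rather than a new difficulty.
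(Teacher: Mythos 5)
Your proposal is correct and follows essentially the same route as the paper: the paper's proof is just the observation that a non-$h$-stable component has a free $\langle g\rangle$-orbit of four distinct components, whose images are then Du Val points of $Z$, contradicting the standing assumption. Your filling-in of why the image is Du Val is right (the stabilizer of the corresponding singular point of $\bar S$ is in fact trivial, so $\pi$ is a local isomorphism there — you could state this more sharply than ``contained in $\langle h\rangle$''), and the slip calling the image of the orbit in $\bar S$ a ``single connected curve'' (it is four points) is harmless.
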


\begin{proof}
  Suppose $\Delta_i$ is not $h$-stable. Then $\Delta_i$, $g(\Delta_i)$, $h(\Delta_i)$
  and $g^3(\Delta_i)$ are distinct components in $\Delta$, and they are contracted to
  Du Val singular points on $\bar S/\langle g\rangle$, a contradiction to our assumption.
\end{proof}

Therefore, applying Proposition~\ref{p2.8} to $(S,h)$ we have
$(S,h)\simeq (S_2,g_2)$, the Shioda-Inose's pair of discriminant 4. From now on,
we set $(S,h)=(S_2,g_2)$. Since is known that $(g_2^*)^2=\id$ on $\Pic(S)$, we can write $g^*|_{\Pic(S)\otimes \mathbb
C}=\diag(I_s,-I_t)$, where $s+t=\rho(S)=20$. Let $x\in T_S$. Suppose $g^*x=\pm x$. Then
\begin{equation*}
  x\cdot \omega_S= g^*(x\cdot \omega_S)= g^*x \cdot g^*\omega_S =
  \pm x\cdot i\omega_S= \pm i (x\cdot \omega_S).
\end{equation*}
It follows that $x\cdot \omega_S=0$. Then $x\in \Pic(S)\cap
T_S=\{0\}$. So $\pm 1$ are not eigenvalues of $g^*|_{T_S\otimes
\mathbb C}$. By Lemma~\ref{l2.9}, we can thus write $g^*|_{T_S\otimes \mathbb C}=
\diag(i,-i)$.

\medskip

\begin{Proposition}\label{p2.16}
  With the notations as in Main Theorem. Suppose $I=4$. Let $h=g^2$. Then $(S,h)\simeq (S_2,g_2)$, the Shioda-Inose's pair of discriminant $4$. Moreover, $\Delta$ is of the
  type $A_1\oplus A_{17}$, $A_5\oplus A_{13}$ or $A_9\oplus A_9$.
\end{Proposition}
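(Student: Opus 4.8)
Throughout, note first that the identity $(S,h)\simeq(S_{2},g_{2})$ has already been obtained in the discussion preceding the proposition (Lemma~\ref{l2.15} reduces $(S,h)$ to the hypotheses of Proposition~\ref{p2.8}), so the plan is to determine the Dynkin type of $\Delta$. I will use throughout that every curve of $S$ is $h$-stable (since $h^{*}|_{\Pic(S)}=\id$), that $S^{[\langle g\rangle]}=S^{h}$ is exactly the union of the ten pairwise disjoint rational curves $F_{1},F_{2},F_{3},G_{1},G_{2},G_{3},H_{11},H_{13},H_{31},H_{33}$ of Proposition~\ref{p2.7}(3) with no isolated point, and that all ten of them lie in $\Delta$.

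First I would show each connected component $\Delta_{a}$ is of type $A_{n}$, arguing as in Step~1 of Proposition~\ref{p2.13} but with the ``Two Go'' Lemma~\ref{l2.12} applied to the involution $h$: the center of a hypothetical $D_{n}$ or $E_{n}$ meets three $h$-stable curves and is therefore $h$-fixed, while a length-one twig off it is an endpoint of $\Delta$; but the endpoint of any chain of $h$-stable $(-2)$-curves is itself $h$-fixed (its second $h$-fixed point is not isolated, hence lies on an $h$-fixed curve, necessarily inside the same component -- impossible), and two adjacent $h$-fixed curves violate Lemma~\ref{l2.12}(1). The same lemma makes the $h$-fixed curves of an $A$-chain alternate with both endpoints $h$-fixed, so $\Delta_{a}=A_{2k_{a}-1}$ contains exactly $k_{a}$ $h$-fixed curves; summing, $\sum_{a}k_{a}=10$ and $\sum_{a}(2k_{a}-1)=\rank\Delta=18$, whence there are exactly two components and $\Delta=A_{2k_{1}-1}\oplus A_{2k_{2}-1}$ with $k_{1}+k_{2}=10$.

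The core is a count via the holomorphic Lefschetz identity in the proof of the ``Four Go'' Lemma~\ref{l2.14}, which in fact reads $\#\{\text{isolated }g\text{-fixed points}\}-2\,\#\{g\text{-fixed curves}\}=4$. Any $g$-fixed curve is $h$-fixed, hence one of the ten; each of the ten is either $g$-fixed, or interchanged by $g$ with another of them (say $w$ of them, $w$ even), or $g$-stable but not $g$-fixed, in which last case $g$ acts on it as an involution with exactly two fixed points -- and these are the only isolated $g$-fixed points (a $g$-fixed point lies in $S^{h}$, hence on one of the ten disjoint curves). So, writing $M$ for the number of $g$-fixed curves, $\#\{\text{isolated points}\}=2(10-M-w)$ and the identity becomes $2M+w=8$. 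This already excludes a swap of the two components ($w=10$, $M=0$) and, via $L(g)=1-i\neq 0$, excludes $S^{g}=\varnothing$; hence $g$ stabilises each $\Delta_{a}$, acting on it either trivially on its curves or by reversing the chain. Component-by-component bookkeeping -- when $g$ fixes all curves, Lemma~\ref{l2.14}(2) applied to the order-$4$ curves at the even positions forces ``$g$-fixed'' and ``not $g$-fixed'' to alternate along the $k_{a}$ odd-position $h$-fixed curves -- shows that the contribution of $\Delta_{a}$ to $2M+w$ is $k_{a}$ when $k_{a}$ is even and $k_{a}\pm 1$ when $k_{a}$ is odd. Thus $2M+w=10$ if $k_{1},k_{2}$ are both even, while $2M+w\in\{8,10,12\}$ if both are odd; since $2M+w=8$ is required, both $k_{i}$ are odd, and $k_{1}+k_{2}=10$ leaves $\{k_{1},k_{2}\}\in\{\{1,9\},\{3,7\},\{5,5\}\}$, i.e. $\Delta$ is $A_{1}\oplus A_{17}$, $A_{5}\oplus A_{13}$, or $A_{9}\oplus A_{9}$.

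The delicate points, where I would be most careful, are all in the third step: extracting the correct form of the Lefschetz relation from the proof of Lemma~\ref{l2.14} (the printed statement there has $M$ and $N$ interchanged); noting that a chain-reversal by $g$ cannot be discarded by a local argument -- the quotient of an $A_{2k-1}$ rational double point by such a $g$ is a non-Du-Val quotient singularity, which $Z$ is permitted to have -- so it must be carried through the count rather than excluded; and checking that these reversed cases nonetheless never produce a Dynkin type outside the list above. Steps one and two, and the closing arithmetic, are routine.
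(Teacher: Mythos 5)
Your proof is correct, but the decisive step is carried out by a genuinely different route from the paper's. Both arguments agree up to the reduction $\Delta=A_{2k_1-1}\oplus A_{2k_2-1}$, $k_1+k_2=10$ (via Lemma~\ref{l2.15}, the ``Two Go'' Lemma~\ref{l2.12} applied to $h$, and Propositions~\ref{p2.13} and~\ref{p2.8}), and both ultimately rest on Lemma~\ref{l2.14}(2) to make ``$g$-fixed'' alternate along the odd-position curves of a chain. Where you diverge is in how the types $A_3\oplus A_{15}$ and $A_7\oplus A_{11}$ are killed. The paper first proves the bound $N\le 4$ on the number of $g$-fixed curves by feeding the eigenvalue decompositions $g^*|_{\Pic(S)\otimes\mathbb C}=\diag(I_s,-I_t)$ and $g^*|_{T_S\otimes\mathbb C}=\diag(i,-i)$ into the \emph{topological} Lefschetz formula, and then shows that each excluded type forces all curves to be $g$-stable and hence forces five $g$-fixed curves. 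You instead localize the \emph{holomorphic} Lefschetz identity of Lemma~\ref{l2.14}(3) onto the ten disjoint curves of $S^h$ from Proposition~\ref{p2.7}(3), obtaining the single equation $2M+w=8$ whose parity immediately rules out even $k_i$; your component-by-component contributions ($k_a$ for $k_a$ even, $k_a\pm1$ for $k_a$ odd, valid whether $g$ acts trivially on the chain or reverses it) check out. Your route is more self-contained (no lattice-theoretic input, no topological Lefschetz theorem) and handles the chain-reversal possibility uniformly rather than having to exclude it; the paper's route has the side benefit that the constant $N=4$ it produces is reused in the subsequent realizability argument for Main Theorem (4). You are also right that the printed statement of Lemma~\ref{l2.14}(3) has $M$ and $N$ interchanged relative to its own proof; the paper silently uses the corrected form, as you do.
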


\begin{proof}
  We only need to check the second assertion. Let $M$ be the number of isolated $g$-fixed points and $N$ the
  number of smooth irreducible $g$-fixed curves. By
  Lemma~\ref{l2.14}, we have $M-2N=4$.

\medskip

Step 1. $N\leq 4$.


We apply the topological Lefschetz fixed point theorem (cf.\ \cite[Lemma~1.6]{ueno1976remark}),
\begin{equation*}
  \chi_{\top}(S^g)= \sum_{i=0}^4 (-1)^i \tr(g^*|_{H^i(S,\mathbb
  Q)}).
\end{equation*}
The left-hand side is $M+2N=4N+4$, and the right-hand side is
\begin{equation*}
  2+\tr(g^*|_{\Pic(S)\otimes\mathbb C})+ \tr(g^*|_{T_S\otimes\mathbb C})= 2+ s-t.
\end{equation*}
where $g^*|_{\Pic(S)\otimes\mathbb C}= \diag(I_s, -I_t)$. Since
$s+t=\rho(S)=20$, we have
\begin{equation*}
  s=11+2N\quad \textrm{and}\quad t=9-2N.
\end{equation*}
It follows that $N\leq 4$.

\medskip

Step 2. $\Delta= A_{2m-1}\oplus A_{2n-1}$, where $m+n=10$.

This follows immediately from Proposition~\ref{p2.13}.

\medskip

Step 3. $\Delta\neq A_3\oplus A_{15}$ and $\Delta\neq A_7\oplus
A_{11}$. So Proposition~\ref{p2.16} will follow.

i) Suppose $\Delta=A_3\oplus A_{15}$. Denote $A_3=C_1-C_2-C_3$ and
$A_{15}=D_1-D_2-\cdots-D_{15}$. Then it follows from the proof of
Proposition~\ref{p2.13} that all $C_i$ and $D_j$ are $h$-stable, and
from which
\begin{equation*}
  C_1, C_3, D_1, D_3, D_5, D_7, D_9, D_{11}, D_{13}, D_{15}
\end{equation*}
are $h$-fixed and others are not. Clearly each connected component is $g$-stable, and
$\Aut(\Delta)=(\mathbb Z/2\mathbb Z)\oplus (\mathbb Z/2\mathbb Z)$. Note that $g(C_1)=C_1$ or $C_3$. For each case $C_2$ is $g$-stable
but not $h$-fixed. By Lemma~\ref{l2.14}, $C_2$ intersects with a
unique $g$-fixed curve. Then $C_1$ or $C_3$ is $g$-stable, and
therefore all $C_i$ are $g$-stable. Similarly, by noting that $D_8$
is $g$-stable but not $h$-fixed, we see that all $D_j$ are
$g$-stable. By Lemma~\ref{l2.14} again, $C_1, D_1, D_5, D_9, D_{13}$
must be $g$-fixed. But this contradicts $N\leq 4$.

ii) Suppose $\Delta=A_7\oplus A_{11}$. Denote
$A_7=C_1-C_2-\cdots-C_7$ and $A_{11}=D_1-D_2-\cdots-D_{11}$. Then
using the same argument as for $A_3\oplus A_{15}$, we can show that
$C_i$ and $D_j$ are $g$-stable for all $i,j$, and therefore $C_1,
C_5$, $D_1, D_5, D_9$ are $g$-fixed. This contradicts $N\leq 4$
again.
\end{proof}

\begin{proof}[Proof of Main Theorem (4)]

It remains to show that $A_1\oplus A_{17}$, $A_5\oplus A_{13}$ and
$A_9\oplus A_9$ are realizable.

Let $g_4$ be the automorphism of $S_2$ induced by the action
$\diag(i,1)$ on $E_{\zeta_4}^2$. Then $g_4^2=g_2$ as in Definition~\ref{d2.6}. From the construction of the
$24$ rational curves in $S_2$ (Figure~\ref{fig2.2}), we see that

\begin{enumerate}[\quad I)]\itemsep=2mm
  \item $4$ curves are $g_4$-fixed, say $F_1, F_2$ and $G_1, G_3$;
  \item $6$ curves are $g_2$-fixed but not $g_4$-fixed, say $F_2,
  G_2, H_{11}, H_{13}, H_{31}, H_{33}$;
  \item $g_4(H_{22})=H_{22}'$ and $g_4(H_{22}')=H_{22}$;
  \item the remaining $12$ curves are $g_4$-stable, but not
  $g_2$-fixed.
\end{enumerate}

Let $g:=g_4$ and $h:=g^2$. Then $\Delta$ contains exactly $4$ $g$-fixed curves (i.e., $N=4$), and $6$ $h$-fixed but not $g$-fixed curves. Consider the following three possible types of $\Delta$.

i) $A_1\oplus A_{17}$.

Since $A_1$ contains at most 1 $g$-fixed curve, $A_{17}$ must contain
at least $3$ $g$-fixed curves. Then every curve in $A_{17}$ is
$g$-stable. Moreover, it contains $9$ $h$-fixed curves.  Noting that $\Delta$ has exactly $4$ $g$-fixed curves, we see that $C_3,C_7,C_{11},C_{15}$ are the $g$-fixed curves and $C_1,C_5,C_9,C_{13},C_{17},A_1$ are the $h$-fixed but not $g$-fixed curves.

ii) $A_5\oplus A_{13}$.

Since $A_5$ contains at most $2$ $g$-fixed curves, $A_{13}$ has a
$g$-fixed curve. So every curve in $A_{13}$ is $g$-stable. We write
\begin{align*}
  A_5 & = C_1-C_2-C_3-C_4-C_5, \\
  A_{13} & = D_1-D_2-D_3-\cdots -D_{13}.
\end{align*}
If $C_1$ is not $g$-stable, then only $C_3$ in $A_5$ is $h$-fixed. Note that it is not $g$-fixed. Then $A_{13}$ shall contain $4$ $g$-fixed curves: $D_1,D_5,D_9,D_{13}$. However, $\Delta$ would have only $5$ $h$-fixed but not $g$-fixed curves $D_3,D_7, D_{11}, D_{15}, C_3$, a contradiction. Therefore, every curve in $A_5$ is $g$-stable. Then $A_5$ contains at least
$1$ $g$-fixed curve, and $A_{13}$ contains at most $3$ $g$-fixed curves. It follows that exactly $4$ curves $C_3,D_3,D_7,D_{11}$ in $\Delta$ are $g$-fixed.

iii) $A_9\oplus A_9$.

We call the second $A_9$ as $A_9'$. If $A_9$ is not $g$-stable, then $g(A_9)=A_9'$ and $g(A_9')=A_9$.
There would be no $g$-fixed curve in $\Delta$, absurd. So both $A_9$ and
$A_9'$ are $g$-stable. Since $A_9$ contains at most $3$ $g$-fixed curves, $A_9'$
contains at least $1$ $g$-fixed curve. Hence every curve in $A_9'$
is $g$-stable. Similarly, every curve in $A_9$ is $g$-stable. On the other hand, $A_9$ should contain at least $2$ $g$-fixed
curves, so does $A_9'$. If we write
\begin{align*}
  A_9 & =C_1- C_2-C_3-\cdots-C_9,\\
  A_9'& =D_1-D_2-D_3-\cdots-D_9,
\end{align*}
then exactly $C_3,C_7,D_3$ and $D_7$ are $g$-fixed.

\medskip

Since we have determined the action of $g$ on $\Delta$ and these
$\Delta$ can be obtained from the 22 $g$-stable rational curves in
$S_2$ (Figure~\ref{fig2.2}), they are all realizable. The dual
graphs are given in Table~\ref{table2.2} (1), (3) and (5).
\end{proof}

Note that in the proof of above, we showed that for each of the
every cases, every irreducible curve in $\Delta$ is $g$-stable.

\section*{4.4. Impossibility of $I=6$}\label{2.4.4}

In order to complete the proof of Main Theorem, in this section we will explore the method used in
\cite[Proposition~2.12, Lemma~2.13]{oguiso1999complete} to prove the following.

\begin{Proposition}\label{p2.17}
  With the notations in Main Theorem, $I\neq 6$.
\end{Proposition}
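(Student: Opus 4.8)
The plan is to rule out $I=6$ by a combination of the constraints coming from the transcendental lattice and the structure theory already developed for $I=2$ and $I=3$. Suppose $I(Z)=6$ and let $(S,g)$ be the associated pair, so $g$ has order $6$ and $g^*\omega_S=\zeta_6\omega_S$. First I would record the lattice-theoretic restriction: $\zeta_6$ must be an eigenvalue of $g^*$ on $T_S$, and since $\rank T_S\in\{2,3\}$ while $\varphi(6)=2$, the action of $g^*$ on $T_S$ is forced — either $T_S$ has rank $2$ and $g^*|_{T_S\otimes\mathbb C}=\diag(\zeta_6,\bar\zeta_6)$, or $T_S$ has rank $3$ with an extra eigenvalue which, by the Galois-stability of the characteristic polynomial over $\mathbb Q$ and the fact that $g^*$ preserves a $\mathbb Z$-structure, must be $\pm1$. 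As in the $I=4$ analysis (the argument showing $\pm1$ are not eigenvalues of $g^*|_{T_S}$ there does not directly transfer, but a variant does), I would pin down which case occurs by applying the Lefschetz-type computations below.

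Next I would exploit the two subgroups $\langle g^3\rangle$ and $\langle g^2\rangle$. Since $g^3$ is an involution with $(g^3)^*\omega_S=\zeta_6^3\omega_S=-\omega_S$, and since each connected component $\Delta_i$ of $\Delta$ is stable under $g^3$ (else its $\langle g\rangle$-orbit would produce Du Val points on $Z$, contradicting our assumption), Proposition~\ref{p2.8} applies as soon as we know $S^{g^3}$ consists of at least $10$ rational curves; similarly $g^2$ has order $3$ with $(g^2)^*\omega_S=\zeta_3^2\omega_S$, so Proposition~\ref{p2.5} governs $(S,g^2)$. The strategy is to count $g^3$-fixed curves: every curve in $S^{[\langle g\rangle]}$ lies in $\Delta$, $\rank\Delta=18$, and running the Proposition~\ref{p2.13}-type argument on $(S,g^3)$ shows $\Delta$ would have to be $A_{2m-1}\oplus A_{2n-1}$ with $m+n=10$ and $(S,g^3)\simeq(S_2,g_2)$. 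Simultaneously the $g^2$-analysis forces, via Proposition~\ref{p2.11}, that $\Delta$ is one of the $13$ Dynkin types listed there and $(S,g^2)\simeq(S_3,g_3)$. The key obstruction is then the incompatibility: the Dynkin types realizable as $A_{2m-1}\oplus A_{2n-1}$ that also appear in the $I=3$ list are very restricted (only those with both summands of the form $A_{3k}$, $A_{3k-1}$), and I would check the arithmetic/combinatorial compatibility of the $g$-action on $\Delta$ — $g$ must simultaneously induce the unique order-$2$ action from Table~\ref{table2.2} and be compatible with the unique order-$3$ action from Table~\ref{table2.1}, and their composite must have order $6$ — and show no such $g$ exists.

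Alternatively, and I expect this to be the cleaner route, I would use the holomorphic and topological Lefschetz fixed point formulas directly for $g$ itself, exactly as in Lemma~\ref{l2.14}. Writing $g^*|_{\Pic(S)\otimes\mathbb C}$ and $g^*|_{T_S\otimes\mathbb C}$ in the diagonal form of Lemma~\ref{l2.9} with $n=6$, the holomorphic Lefschetz number $L(g)=1+\overline{\tr(g^*|_{H^2(S,\mathcal O_S)})}=1-\bar\zeta_6$ must equal a sum of local contributions $a(P_j)$ from isolated $g$-fixed points (each of a determined type, since $g^*$ at such a point is $\diag(\zeta_6^a,\zeta_6^b)$ with $a+b\equiv1\bmod6$) plus curve contributions $b(C_k)$; combining this with the topological Lefschetz formula $\chi_{\mathrm{top}}(S^g)=2+\tr(g^*|_{\Pic\otimes\mathbb C})+\tr(g^*|_{T_S\otimes\mathbb C})$ and with $\rho(S)=t_0+2(t_1+t_2)+t_3$ and $\rank\Delta=18$ yields a finite Diophantine system for the multiplicities $t_i,r_i$ and the fixed-point counts. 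The main obstacle is organizing this bookkeeping so that the $g^3$- and $g^2$-fixed loci (which feed into Propositions~\ref{p2.5} and~\ref{p2.8}) are correctly related to the $g$-fixed locus; once that is set up, I expect every branch to yield a numerical contradiction — either $N<0$, or a fixed-curve count incompatible with $S^{g^2}$ having exactly $6$ rational curves and $S^{g^3}$ having exactly $10$, or a Dynkin type not on both lists — thereby forcing $I\neq6$.
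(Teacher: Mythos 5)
Your central step does not go through. You assert that every connected component of $\Delta$ is $g^3$-stable (so that the $I=2$ analysis of Proposition~\ref{p2.13} applies and forces $\Delta=A_{2m-1}\oplus A_{2n-1}$) and simultaneously that the $I=3$ analysis of Proposition~\ref{p2.11} applies to all of $\Delta$, and you then hope to intersect the two lists. But for $I=6$ the only thing the ``no Du Val points on $Z$'' hypothesis gives is that each component has nontrivial stabilizer in $\mathbb Z/6\mathbb Z$, i.e.\ is $g^3$-stable \emph{or} $g^2$-stable --- not both, and not necessarily $g$-stable. A component that is $g^2$-stable but not $g^3$-stable is permitted (its $\langle g\rangle$-orbit has two elements and its image in $Z$ is a non--Du Val quotient singularity), and such components have Dynkin type $A$ or $D$ rather than $A_{\mathrm{odd}}$; so neither Proposition~\ref{p2.13} nor Proposition~\ref{p2.11} applies to $\Delta$ as a whole, and the ``incompatibility of the two lists'' collapses. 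Even if both did apply, the lists overlap (e.g.\ $A_9\oplus A_9$ and $A_3\oplus A_{15}$ are on both), so you would still owe the finer action-compatibility argument you only gesture at. Likewise, ``$S^{g^3}$ has exactly $10$ fixed curves and $S^{g^2}$ exactly $6$'' is not available: those counts come from Propositions~\ref{p2.8} and~\ref{p2.5}, whose hypotheses (at least $10$, resp.\ $6$, fixed rational curves) you have not verified, and the paper's proof never establishes $(S,g^3)\simeq(S_2,g_2)$ or $(S,g^2)\simeq(S_3,g_3)$ in the $I=6$ case.

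Your fallback --- the holomorphic and topological Lefschetz formulas for $g$, $g^2$, $g^3$ --- is indeed the engine of the paper's argument, but you leave the decisive bookkeeping unexecuted. What actually makes it work is: (i) isolated $g$-fixed points come in two local types $\diag(\zeta_6^2,\zeta_6^5)$ and $\diag(\zeta_6^3,\zeta_6^4)$, and $g$-stable non-fixed curves inside $S^{g^2}$ and $S^{g^3}$ pair them up so that the two types occur in equal even numbers $2\ell$; (ii) the holomorphic Lefschetz formula then gives $\ell=c+1$, where $c$ is the number of $g$-fixed curves; (iii) the topological Lefschetz formulas for $g$, $g^2$, $g^3$ together with $\dim H^2(S,\mathbb Q)=22$ yield $c+p+q\le 2$ for the orbit counts $p,q$ of $g^2$- and $g^3$-fixed curves missing $S^g$; and (iv) the stabilizer dichotomy on components combined with $\rank\Delta=18$ forces $p=2$, $c=q=0$ and leaves two residual configurations, killed by a parity count and by the absence of $g$-fixed curves. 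Without (i)--(iv) the ``finite Diophantine system'' you invoke is not actually pinned down, so as written the proposal is a plan rather than a proof.
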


\begin{proof}
  We assume that there is a log Enriques surface $Z$ of rank 18 without Du Val singularities. Let $(S,g)$ be the associated pair. Let $P$ be an isolated $g$-fixed point. Then $g^*$ can be written as either
\begin{enumerate}[\quad \rm i)]\itemsep=1mm
  \item $\diag(\zeta_6^2,\zeta_6^5)$, or
  \item $\diag(\zeta_6^3,\zeta_6^4)$
\end{enumerate}
with appropriate coordinates around $P$.

\medskip

Step 1. There are even number of isolated $g$-fixed points of the second type.

Suppose $g^*=\diag(\zeta_6^2,\zeta_6^5)$ near $P$. Then
$(g^2)^*=(\zeta_6^4,\zeta_6^4)$ near $P$. It follows that $P$ is an
isolated $g^2$-fixed point. Suppose $g^*=\diag(\zeta_6^3,\zeta_6^4)$ near $P$. Then
$(g^2)^*=\diag(1,\zeta_6^2)$,  and there exists a unique smooth
rational $g^2$-fixed curve $C$ passing through $P$. Since $S^{g^2}$
is smooth, $C$ is $g$-stable but not $g$-fixed. Let $Q$ be the other
$g$-fixed point on $C$. Since $Q$ is not an isolated $g^2$-fixed
point, it is also an isolated $g$-fixed point of the second type. Therefore, the $g$-fixed points of the second type come in pairs.
There are even number of such points.

\medskip

Step 2. The number of isolated $g$-fixed points of the first type equals that of the second type.

Let $P$ be an isolated $g$-fixed point. Since $S^g\subseteq
S^{g^3}$,  a disjoint union of smooth rational curves, there is a
unique $g^3$-fixed curve $C$ passing through $P$. Hence, $C$ is
$g$-stable but not $g$-fixed, and it contains exactly $2$ $g$-fixed
points. Note that if $P$ is of the first type
$\diag(\zeta_6^2,\zeta_6^5)$, then $g^*|_{T_{C,P}}=\zeta_6^2$; if
$P$ is of the second type $\diag(\zeta_6^3, \zeta_6^4)$, then
$g^*|_{T_{C,P}}=\zeta_6^4$. So the other isolated $g$-fixed point on
$C$ is of different type of $P$. Therefore, there is a one-to-one correspondence between the set of
$g$-fixed points of the first type and that of the second type. Step
2 is proved.

\medskip

Now we can set $P_1,\ldots,P_{2\ell}$ and $Q_1,\ldots,Q_{2\ell}$ to
be the isolated $S^g$-fixed points of type
$\diag(\zeta_6^2,\zeta_6^5)$ and of type
$\diag(\zeta_6^3,\zeta_6^4)$, respectively. Suppose there are $c$ rational smooth $g$-fixed curves, say
$C_1,\ldots,C_c$. We claim that

\medskip

Step 3. $\ell=c+1$.

Similarly as in the proof of Lemma~\ref{l2.14}, we use the
holomorphic Lefschetz fixed point formula
\begin{equation*}
  L(g)= \sum_{i=0}^2 (-1)^i \tr(g^*|_{H^i(S,\mathcal O_S)}) = \sum_{i=1}^{2\ell} a(P_i)+ \sum_{i=1}^{2\ell} a(Q_i)+ \sum_{i=1}^c b(C_i).
\end{equation*}
We can compute that
\begin{equation*}
  \sum_{i=0}^2(-1)^i \tr(g^*|_{H^i(S,\mathcal O_S)})=1+0+\dfrac{1}{\zeta_6}= \dfrac{3-i\sqrt 3}{2}.
\end{equation*}
\begin{align*}
  a(P_i) & = \dfrac{1}{\det(1-g^*|_{T_{P_i}})}= \dfrac{1}{(1-\zeta_6^2)(1-\zeta_6^5)}= \dfrac{3-i\sqrt 3}{6}, \\
  a(Q_i) & = \dfrac{1}{\det(1-g^*|_{T_{Q_i}})}= \dfrac{1}{(1-\zeta_6^3)(1-\zeta_6^4)}= \dfrac{3-i\sqrt 3}{12},\\
  b(C_i) & = \dfrac{1-\pi(C_i)}{1-\zeta_6}- \dfrac{\zeta_6\, C_i^2}{(1-\zeta_6)^2}= -\dfrac{3-i\sqrt 3}{2}.
\end{align*}
Therefore, $\ell=c+1$.

\medskip

Step 4. Determine $S^{g^2}$.

If $P$ is a $g^2$-fixed but not $g$-fixed point, then so is $g(P)$. Therefore, there are even number of $g^2$-fixed but not $g$-fixed points. If $C$ is a rational smooth irreducible $g^2$-fixed curve which does not contain any $g$-fixed point, so is $g(C)$. Hence, there are even number of such curves.

Suppose the isolated $g^2$-fixed points are $P_1,\ldots,P_{2c+2}, R_1,\ldots, R_{2k}$, and the smooth rational $g^2$-fixed curves are $C_1, \ldots,C_c, D_1,\ldots,D_{c+1},\ldots,F_1,\ldots, F_{2p}$, where $R_i$ is  not $g$-fixed, $Q_{2i-1},Q_{2i}\in D_i$, and $F_i$ does not contain at $g$-fixed point. Then apply Lemma~\ref{l2.10} to $(S,g^2)$, we obtain
\begin{equation*}
  (2c+2+2k)-(c+c+1+2p)=3,
\end{equation*}
which implies $k=p+1$.

\medskip

Step 5. Determine $S^{g^3}$.

We note $g^3$ is a non-symplectic involution on $S$, and so there is no isolated $g^3$-fixed point. If $G$ is a $g^3$-fixed curve which does not contain any $g$-fixed point, then so are $g(G)$ and $g^2(G)$. Therefore, the smooth rational $g^3$-fixed curves are $C_1,\ldots,C_c$, $E_1,\ldots,E_{2c+2}$, $G_1,\ldots,G_{3q}$, where $P_i,Q_i\in E_i$ and $G_i$ does not contain any $g$-fixed point.

\medskip

Step 6. $c+p+q\leq 2$.

Since $\ord(g)=6$, we can write
\begin{equation*}
  g^*|_{H^2(S,\mathbb Q)}= \diag(I_\alpha,-I_\beta, \zeta_6^2 I_\gamma, \bar\zeta_6^2 I_\gamma, \zeta_6 I_{1+\delta}, \bar\zeta_6 I_{1+\delta}),
\end{equation*}
where $\alpha,\beta,\gamma,\delta\geq 0$. Let $j=1$ in the topological Lefschetz fixed point formula
\begin{equation*}
  \chi_{\top}(S^{g^j})= \sum_{i=0}^4 (-1)^i \tr((g^j)^*|_{H^i(S,\mathbb Q)}).
\end{equation*}
We have
\begin{equation*}
  (2c+2)+(2c+2)+2\cdot c=2+\alpha-\beta-\gamma+(\delta+1).
\end{equation*}

$(g^2)^*|_{H^2(S,\mathbb Q)}= \diag(I_{\alpha+\beta},
\zeta_6^2I_{\gamma+\delta+1}, \bar\zeta_6^2 I_{\gamma+\delta+1})$.
Then for $j=2$ we have
\begin{equation*}
  (2c+2)+(2p+2)+2[c+(c+1)+2p]= 2+(\alpha+\beta)-(\gamma+\delta+1).
\end{equation*}

$(g^3)^*|_{H^2(S,\mathbb Q)}= \diag(I_{\alpha+2\gamma}, -I_{\beta+2+2\delta})$. Then for $j=3$ we have
\begin{equation*}
  2[c+(2c+2)+3q]=2+(\alpha+2\gamma)-(\beta+2+2\delta).
\end{equation*}
We also note that
\begin{equation*}
  \alpha+\beta+2\gamma+2(1+\delta)=\dim H^2(S,\mathbb Q)=22.
\end{equation*}
It can be solved that $\delta=-c-p-q+2$. In particular, $c+p+q\leq 2$.

\medskip

Step 7. Determine the possible types of $\Delta$.

Let $\Delta_i$ be a connected component of $\Delta$. Then $\Delta_i$
is either $g^3$-stable or $g^2$-stable, otherwise $g^k(\Delta_i)$,
$k=0,\ldots,5$, would be contracted to a single Du Val singular
point in $\bar S/\langle g\rangle$, which should not exist by
assumption.

Suppose $\Delta_i$, $i=1,\ldots,m$, are the $g^3$-stable connected
components of $\Delta$. Since $(g^3)^*\omega_S= -\omega_S$, using
the same argument as for $I=2$, we see that $\Delta_i= A_{2m_i-1}$
for some $m_i$, which contains exactly $m_i$ smooth rational
$g^3$-fixed curves. On the other hand, by computation in Step 4,
there are $c+(2c+c)+3q=3(c+q)+2$ $g$-fixed curves. Therefore,
\begin{equation*}
  \sum_{i=1}^n \rank \Delta_i = \sum_{i=1}^m (2m_i-1)= 6(c+q)+4-m.
\end{equation*}
Since $\ell=c+1>0$, $S^g\neq \emptyset$. We see that $m\geq 1$.

Suppose $\Delta'_j$, $j=1,\ldots,n$, are the $g^2$-stable but not
$g$-stable connected components of $\Delta$.  Since
$(g^2)^*\omega_S= \zeta_3\omega_S$, using the same argument as for
$I=3$, we see that each $\Delta_j'$ has Dynkin type $A$ or $D$.

Since each $\Delta_j'$ contains at least one $g^2$-fixed curve and
$F_1,\ldots,F_{2p}$ are the only $g^2$-fixed curves in $\Delta'_j$,
we have $n\leq 2p$. On the other hand, from the proof of
Proposition~\ref{p2.11} Step 4, if $\rank \Delta_j'=\alpha_j$, then
$\Delta_j$ contains at least $\lceil(\alpha_j-1)/3\rceil$ smooth
$g^2$-fixed curves. We have an estimation
\begin{equation*}
  2p\geq \sum_{j=1}^n \lceil(\alpha_j-1)/3\rceil \geq \sum_{j=1}^n
  (\alpha_j-1)/3.
\end{equation*}
That is,
\begin{equation*}
  \sum_{j=1}^n \rank \Delta_j' \leq 6p+n.
\end{equation*}
Note that $\Delta_j'$ is not $g^3$-stable, otherwise it would also
be $g$-stable. So $\Delta_j'$ and $g^3(\Delta_j')$ are disjoint
connected components in $\Delta$. In particular, $n$ is even. It follows from $\rank \Delta=18$ that
\begin{align*}
  18 & \leq 6(c+q)+4-m+6p+n =6(c+p+q)+4-m+n \\
  & \leq 6\cdot 2+4-m+n =16-m+n\\
  & \leq 16-1+n = 15+n\\
  & \leq 15+2p.
\end{align*}
Then $p\geq 2$ and it follows from $c+p+q\leq 2$ that $p=2$ and
$c=q=0$. So $\Delta$ has no $g$-fixed curve. Since $n$ is even, $n=4$ and $m=1$ or $2$. We are left to show that these two cases are impossible.

\medskip

Recall that $\Delta_i$ has the form $A_{2m_i-1}$ and contains
exactly $m_i$ $g^3$-fixed curves, and the $2$ irreducible
$g^3$-fixed curves are contained in $\coprod_{i=1}^m \Delta_i$. We
have $\sum_{i=1}^m m_i=2$.

If $m=1$, then $m_1=2$ and $\Delta_1=A_3$. However, this would imply
that $\sum_{j=1}^4 \rank \Delta_j'= 15$, which needs to be even. If $m=2$, then $m_1=m_2=1$ and $\Delta_1=\Delta_2=A_1$. They are
$g^3$-fixed. On the other hand, note that $\ord(g^2)=3$. By
considering the $g^2$-action on $\Delta$, we see that $\Delta_1$ and
$\Delta_2$ are also $g^2$-fixed. It follows that $\Delta_1$ and
$\Delta_2$ $g$-fixed, which contradicts our computation that there
is no $g$-fixed curve.
\end{proof}

This complete the proof of Proposition~\ref{p2.17} and also Main Theorem (1).

\section[List of Dynkin's Types]{The List of Dynkin's Types of $\Delta$}\label{sec2.5}

\begin{table}[h]
  \caption{$I=3$}\label{table2.1}
\end{table}
``$f$'' denotes the $g$-fixed curve and $s$ denotes the $g$-stable
but not $g$-fixed curve. We use the same labeling for curves as in
Figure~\ref{fig2.1}.
\begin{center}
  (A) Realizable Cases.
\end{center}
{\footnotesize

Case I: $A_{18}$: $s-f-s-s-f-s-s-f-s-s-f-s-s-f-s-s-f-s$

$E_{33}-G_3-E_{13}-E_{13}'-F_1-E_{11}'-E_{11}- G_1-E_{31}- E_{31}'-F_3-E_{32}'-E_{32}-G_2-E_{22}-E_{22}'-F_2-E_{21}'$.

Case II: $D_{18}$: $\displaystyle {s\atop
s}\!>f-s-s-f-s-s-f-s-s-f-s-s-f-s-s-f$

$\displaystyle {{E_{11}'}\atop{E_{12}'}}\!>F_1-E_{13}'-E_{13}-G_3- E_{33}-E_{33}' -F_3 -E_{31}'-E_{31}-G_1-E_{21}-E_{21}'-F_2- E_{22}'-E_{22}-G_2$

Case III: $A_{3m}\oplus A_{3n}$, where $m+n=6$, $1\leq m\leq n\leq 5$.

(1) $A_3\oplus A_{15}$: $s-f-s$, $s-f-s-s-f-s-s-f-s-s-f-s-s-f-s$

$E_{11}'-F_1-E_{12}'$

$E_{13}-G_3- E_{33}-E_{33}'- F_3- E_{31}'- E_{31}- G_1- E_{21}- E_{21}'- F_2- E_{22}'- E_{22}- G_2 - E_{32}$

(2) $A_6\oplus A_{12}$: $s-f-s-s-f-s$, $s-f-s-s-f-s-s-f-s-s-f-s$

$E_{21}- G_1- E_{11}- E_{11}'- F_1- E_{12}'$

$E_{13}-G_3- E_{23}-E_{23}'- F_2- E_{22}'- E_{22}- G_2- E_{32}- E_{32}'- F_3- E_{33}'$

(3) $A_9\oplus A_9$: $s-f-s-s-f-s-s-f-s$, $s-f-s-s-f-s-s-f-s$

$E_{11}'- F_1- E_{12}'- E_{12}- G_2- E_{22}- E_{22}'- F_2 - E_{23}'$

$E_{13}- G_3- E_{33}-E_{33}'- F_3- E_{31}'- E_{31}-G_1 - E_{21}$

Case IV: $D_{3m}\oplus A_{3n}$, where $m+n=6$.

(1) $D_6 \oplus A_{12}$: $\displaystyle {s\atop s}\!>f-s-s-f$,
$s-f-s-s-f-s-s-f-s-s-f-s$

$\displaystyle {{E_{11}'}\atop {E_{12}'}}\!>F_1- E_{13}'- E_{13}- G_3$

$E_{33}'- F_3-E_{32}'-E_{32}- G_2- E_{22}-E_{22}'-F_2 - E_{21}'-E_{21}-G_1- E_{31}$

(2) $D_9\oplus A_9$: $\displaystyle {s\atop s}\!>f-s-s-f-s-s-f$, $s-f-s-s-f-s-s-f-s$

$\displaystyle {{E_{11}'}\atop {E_{12}'}}\!>F_1- E_{13}'- E_{13}-G_3- E_{23}- E_{23}'- F_2$

$E_{22}-G_2- E_{32}- E_{32}'- F_3- E_{31}'- E_{31}- G_1- E_{21}$

(3) $D_{12}\oplus A_6$: $\displaystyle {s\atop
s}\!>f-s-s-f-s-s-f-s-s-f$, $s-f-s-s-f-s$

$\displaystyle {{E_{11}'}\atop {E_{12}'}}\!> F_1- E_{13}'-E_{13}- G_3- E_{23}- E_{23}'- F_2- E_{22}'- E_{22}- G_2$

$E_{33}'- F_3-E_{31}'-E_{31}-G_1-E_{21}$

(4) $D_{15}\oplus A_3$: $\displaystyle {s\atop
s}\!>f-s-s-f-s-s-f-s-s-f-s-s-f$, $s-f-s$

$\displaystyle {{E_{11}'}\atop {E_{12}'}}\!>F_1-E_{13}'-E_{13}- G_3- E_{23}- E_{23}'- F_2- E_{21}'-E_{21}- G_1- E_{31}- E_{31}'-F_3$

$E_{22}-G_2- E_{32}$

Case V: $D_{3m}\oplus D_{3n}$, where $m+n=6$, $2\leq m\leq n\leq 4$.

(1) $D_6\oplus D_{12}$: $\displaystyle {s\atop s}\!>f-s-s-f$,
$\displaystyle {s\atop s}\!>f-s-s-f-s-s-f-s-s-f$.

$\displaystyle {{E_{11}'}\atop {E_{12}'}}\!> F_1- E_{13}'-E_{13}-G_3$

$\displaystyle {{E_{33}'}\atop {E_{32}'}}\!>F_3- E_{31}'-E_{31}-G_1-E_{21}- E_{21}'- F_2- E_{22}'-E_{22}-G_2$

Case VI: $D_{3n+1}\oplus A_{3m-1}$, $m+n=6$, $1\leq m,n\leq 5$.

(1) $D_4\oplus A_{14}$: $\displaystyle {s\atop s}\!>f-s$,
$f-s-s-f-s-s-f-s-s-f-s-s-f-s$

$\displaystyle {{E_{11}'}\atop {E_{12}'}}\!> F_1- E_{13}'$

$G_3-E_{23}-E_{23}'- F_2- E_{21}'-E_{21}- G_1- E_{31}- E_{31}'- F_3- E_{32}' -E_{32}- G_2- E_{22}$

(2) $D_7\oplus A_{11}$: $\displaystyle {s\atop s}\!>f-s-s-f-s$,
$f-s-s-f-s-s-f-s-s-f-s$

$\displaystyle {{E_{11}'}\atop {E_{12}'}}\!>F_1-E_{13}'-E_{13}- G_3-E_{23}$

$G_2- E_{22}- E_{22}'- F_2- E_{21}'-E_{21}- G_1- E_{31}- E_{31}'-F_3-E_{33}'$

(3) $D_{10}\oplus A_8$: $\displaystyle {s\atop
s}\!>f-s-s-f-s-s-f-s$, $f-s-s-f-s-s-f-s$

$\displaystyle {{E_{11}'}\atop {E_{12}'}}\!>F_1- E_{13}'-E_{13}- G_3- E_{23}- E_{23}'- F_2- E_{21}'$

$G_1-E_{31}- E_{31}'- F_3- E_{32}'- E_{32}- G_2- E_{22}$

(4) $D_{13}\oplus A_5$: $\displaystyle {s\atop
s}\!>f-s-s-f-s-s-f-s-s-f-s$, $f-s-s-f-s$

$\displaystyle {{E_{11}'}\atop {E_{12}'}}\!> F_1-E_{13}'-E_{13} - G_3- E_{33}-E_{33}'-F_3-E_{31}'-E_{31}- G_1-E_{21}$

$G_2-E_{22}-E_{22}'-F_2-E_{23}'$

(5) $D_{16}\oplus A_2$: $\displaystyle {s\atop
s}\!>f-s-s-f-s-s-f-s-s-f-s$, $f-s$

$\displaystyle {{E_{11}'}\atop {E_{12}'}}\!>F_1- E_{13}'-E_{13}-G_3- E_{23}-E_{23}'- F_2-E_{21}'-E_{21}-G_1- E_{31}-
E_{31}'-F_3- E_{32}'$

Case VII: $A_{3m}\oplus A_{3n}\oplus A_{3r}$, $m+n+r=6$, $1\leq m\leq n\leq r\leq 4$.

(1) $A_3\oplus A_3\oplus A_{12}$: $s-f-s$, $s-f-s$,
$s-f-s-s-f-s-s-f-s-s-f-s$

$E_{13}-G_3- E_{23}$

$E_{32}'-F_3-E_{33}'$

$E_{11}'-F_1-E_{12}'-E_{12}-G_2-E_{22}-E_{22}'-F_2- E_{21}'-E_{21}- G_1-E_{31}$

(2) $A_3\oplus A_6\oplus A_9$: $s-f-s$, $s-f-s-s-f-s$,
$s-f-s-s-f-s-s-f-s$

$E_{13}-G_3-E_{33}$

$E_{21}-G_1-E_{31}-E_{31}'-F_3-E_{32}'$

$E_{11}'-F_1-E_{12}'-E_{12}-G_2-E_{22}-E_{22}'-F_2-E_{23}'$

(3) $A_6\oplus A_6 \oplus A_6$: $s-f-s-s-f-s$, $s-f-s-s-f-s$, $s-f-s-s-f-s$

$E_{11}'-F_1-E_{12}'-E_{12}-G_2-E_{22}$

$E_{13}-G_3-E_{33}-E_{33}'-F_3-E_{32}'$

$E_{23}'-F_2-E_{21}'-E_{21}-G_1-E_{31}$

Case VIII: $D_6\oplus D_6\oplus D_6$: $\displaystyle {s\atop
s}\!>f-s-s-f$, $\displaystyle {s\atop s}\!>f-s-s-f$, $\displaystyle
{s\atop s}\!>f-s-s-f$

$\displaystyle {{E_{11}'}\atop {E_{12}'}}\!>F_1-E_{13}'-E_{13}-G_3$

$\displaystyle {{E_{21}'}\atop {E_{23}'}}\!>F_2-E_{22}'-E_{22}-G_2$

$\displaystyle {{E_{32}'}\atop {E_{33}'}}\!>F_3-E_{31}'-E_{31}-G_1$

Case X: $A_{3m}\oplus A_{3n}\oplus D_{3r}$, where $m+n+r=6$, $m\leq n$.

(1) $A_3\oplus A_3\oplus D_{12}$: $s-f-s$, $s-f-s$, $\displaystyle
{s\atop s}\!>f-s-s-f-s-s-f-s-s-f$

$E_{22}-G_2-E_{32}$

$E_{31}'-F_3-E_{33}'$

$\displaystyle {{E_{11}'}\atop {E_{12}'}}\!>F_1-E_{13}'-E_{13}-G_3- E_{23}-E_{23}'-F_2- E_{21}'- E_{21}- G_1$

(2) $A_3\oplus A_6\oplus D_9$: $s-f-s$, $s-f-s-s-f-s$, $\displaystyle {s\atop s}\!>f-s-s-f-s-s-f$

$E_{22}-G_2-E_{32}$

$E_{21}-G_1-E_{31}-E_{31}'-F_3-E_{33}'$

$\displaystyle {{E_{11}'}\atop {E_{12}'}}\!>F_1-E_{13}'-E_{13}-G_3-E_{23}-E_{23}'-F_2$

(3) $A_3\oplus A_9\oplus D_6$: $s-f-s$, $s-f-s-s-f-s-s-f-s$, $\displaystyle {s\atop s}\!>f-s-s-f$.

$E_{22}-G_2-E_{32}$

$E_{23}'-F_2-E_{21}'-E_{21}-G_1-E_{31}-E_{31}'-F_3- E_{33}'$

$\displaystyle {{E_{11}'}\atop {E_{12}'}}\!>F_1-E_{13}'-E_{13}-G_3$

(4) $A_6\oplus A_6\oplus D_6$: $s-f-s-s-f-s$, $s-f-s-s-f-s$, $\displaystyle {s\atop s}\!>f-s-f-s$

$E_{22}-G_2-E_{32}-E_{32}'-F_3-E_{33}'$

$E_{23}'-F_2-E_{21}'-E_{21}-G_1-E_{31}$

$\displaystyle {{E_{11}'}\atop {E_{12}'}}\!>F_1-E_{13}'-E_{13}-G_3$

Case XI: $D_{3m+1}\oplus A_{3n}\oplus A_{3r-1}$, where $m+n+r=6$.

(1) $D_4\oplus A_3\oplus A_{11}$: $\displaystyle {s\atop s}\!>f-s$,
$s-f-s$, $f-s-s-f-s-s-f-s-s-f-s$

$\displaystyle {{E_{11}'}\atop {E_{12}'}}\!>F_1-E_{13}'$

$E_{21}-G_1-E_{31}$

$F_3-E_{32}'-E_{32}-G_2-E_{22}- E_{22}'- F_2- E_{23}'-E_{23} -G_3-E_{33}$

(2) $D_4\oplus A_6\oplus A_8$: $\displaystyle {s\atop s}\!>f-s$, $s-f-s-s-f-s$, $f-s-s-f-s-s-f-s$

$\displaystyle {{E_{11}'}\atop {E_{12}'}}\!>F_1-E_{13}'$

$E_{21}-G_1-E_{31}-E_{31}'-F_3-E_{32}'$

$G_2-E_{22}-E_{22}'-F_2- E_{23}'-E_{23}-G_3-E_{33}$

(3) $D_4\oplus A_9\oplus A_5$: $\displaystyle {s\atop s}\!>f-s$, $s-f-s-s-f-s-s-f-s$, $f-s-s-f-s$

$\displaystyle {{E_{11}'}\atop {E_{12}'}}\!>F_1-E_{13}'$

$E_{21}-G_1- E_{31}-E_{31}'-F_3- E_{33}'-E_{33}- G_3- E_{23}$

$F_2-E_{22}'-E_{22}-G_2-E_{32}$

(4) $D_4\oplus A_{12}\oplus A_2$: $\displaystyle {s\atop s}\!>f-s$,
$s-f-s-s-f-s-s-f-s-s-f-s$, $f-s$

$\displaystyle {{E_{11}'}\atop {E_{12}'}}\!>F_1-E_{13}'$

$E_{21}-G_1-E_{31}-E_{31}'-F_3- E_{32}'-E_{32}- G_2- E_{22}- E_{22}'- F_2- E_{23}'$

$G_3-E_{33}$

(5) $D_7\oplus A_3\oplus A_8$: $\displaystyle {s\atop s}\!>f-s-s-f-s$, $s-f-s$, $f-s-s-f-s-s-f-s$

$\displaystyle {{E_{11}'}\atop {E_{12}'}}\!>F_1- E_{13}'-E_{13}-G_3- E_{33}$

$E_{22}-G_2-E_{32}$

$F_3-E_{31}'-E_{31}- G_1- E_{21}-E_{21}'-F_2- E_{23}'$

(6) $D_7\oplus A_6\oplus A_5$: $\displaystyle {s\atop s}\!>f-s-s-f-s$, $s-f-s-s-f-s$, $f-s-s-f-s$

$\displaystyle {{E_{11}'}\atop {E_{12}'}}\!>F_1- E_{13}'-E_{13}-G_3- E_{33}$

$E_{23}'-F_2- E_{22}'-E_{22}- G_2- E_{32}$

$F_3-E_{31}'-E_{31}-G_1- E_{21}$

(7) $D_7\oplus A_9\oplus A_2$: $\displaystyle {s\atop s}\!>f-s-s-f-s$, $s-f-s-s-f-s-s-f-s$, $f-s$

$\displaystyle {{E_{11}'}\atop {E_{12}'}}\!>F_1- E_{13}'-E_{13}-G_3-E_{33}$

$E_{23}'-F_2-E_{21}'-E_{21}-G_1- E_{31}- E_{31}'- F_3- E_{32}'$

$G_2- E_{22}$

(8) $D_{10}\oplus A_3\oplus A_5$: $\displaystyle {s\atop s}\!>f-s-s-f-s-s-f-s$, $s-f-s$, $f-s-s-f-s$

$\displaystyle {{E_{11}'}\atop {E_{12}'}}\!>F_1- E_{13}'-E_{13}-G_3-E_{33}-E_{33}'- F_3- E_{31}'$

$E_{22}- G_2- E_{32}$

$G_1-E_{21}-E_{21}'- F_2- E_{23}'$

(9) $D_{10}\oplus A_6\oplus A_2$: $\displaystyle {s\atop s}\!>f-s-s-f-s-s-f-s$, $s-f-s-s-f-s$, $f-s$

$\displaystyle {{E_{11}'}\atop {E_{12}'}}\!>F_1-E_{13}'- E_{13}- G_3- E_{33}- E_{33}'- F_3- E_{31}'$

$E_{23}'-F_2- E_{22}'-E_{22}- G_2-E_{32}$

$G_1- E_{21}$

(10) $D_{13}\oplus A_3\oplus A_2$: $\displaystyle {s\atop
s}\!>f-s-s-f-s-s-f-s$, $s-f-s$, $f-s$

$\displaystyle {{E_{11}'}\atop {E_{12}'}}\!>F_1-E_{13}'-E_{13}-G_3- E_{33}- E_{33}'- F_3- E_{31}'-E_{31}- G_1- E_{21}$

$E_{22}- G_2- E_{32}$

$F_2-E_{23}'$

Case XII: $D_{3m+1}\oplus D_{3n+1}\oplus A_{3r-2}$, where $m+n+r=6$, $m\leq n$.

(2) $D_4\oplus D_7\oplus A_7$: $\displaystyle {s\atop s}\!>f-s$, $\displaystyle {s\atop s}\!>f-s-s-f-s$, $f-s-s-f-s-s-f$

$\displaystyle {{E_{11}'}\atop {E_{12}'}}\!>F_1-E_{13}'$

$\displaystyle {{E_{21}'}\atop {E_{22}'}}\!>F_2-E_{23}'-E_{23}-G_3-E_{33}$

$G_1-E_{31}-E_{31}'-F_3-E_{32}'-E_{32}-G_2$

(5) $D_7\oplus D_7\oplus A_4$: $\displaystyle {s\atop s}\!>f-s-s-f-s$, $\displaystyle {s\atop s}\!>f-s-s-f-s$, $f-s-s-f$

$\displaystyle {{E_{12}'}\atop {E_{13}'}}\!>F_1- E_{11}'-E_{11}-G_1-E_{31}$

$\displaystyle {{E_{21}'}\atop {E_{22}'}}\!>F_2- E_{23}'-E_{23}-G_3- E_{33}$

$G_2-E_{32}- E_{32}'- F_3$

(6) $D_7\oplus D_{10} \oplus A_1$: $\displaystyle {s\atop s}\!>f-s-s-f-s$, $\displaystyle {s\atop s}\!>f-s-s-f-s-s-f-s$, $f$

$\displaystyle {{E_{12}'}\atop {E_{13}'}}\!>F_1- E_{11}'-E_{11}- G_1- E_{31}$

$\displaystyle {{E_{21}'}\atop {E_{22}'}}\!> F_2- E_{23}'-E_{23}- G_3-E_{33}- E_{33}'- F_3- E_{32}'$

$G_2$

Case XIII: $D_{3n+1}\oplus D_{3m}\oplus A_{3r-1}$, where $m+n+r=6$, $m\geq 2$.

(3) $D_4\oplus D_{12} \oplus A_2$: $\displaystyle {s\atop s}\!>f-s$,
$\displaystyle {s\atop s}\!>f-s-s-f-s-s-f-s-s-f$, $f-s$

$\displaystyle {{E_{11}'}\atop {E_{12}'}}\!>F_1- E_{13}'$

$\displaystyle {{E_{21}'}\atop {E_{22}'}}\!>F_2- E_{23}'- E_{23}-G_3- E_{33}- E_{33}'- F_3- E_{31}'-E_{31}- G_1$

$G_2-E_{32}$

(4) $D_7\oplus D_6 \oplus A_5$: $\displaystyle {s\atop s}\!>f-s-s-f-s$, $\displaystyle {s\atop s}\!>f-s-s-f$, $f-s-s-f-s$

$\displaystyle {{E_{11}'}\atop {E_{12}'}}\!> F_1- E_{13}'- E_{13}- G_3- E_{33}$

$\displaystyle {{E_{22}'}\atop {E_{23}'}}\!>F_2- E_{21}'-E_{21}- G_1$

$G_2- E_{32}-E_{32}'- F_3- E_{31}'$

(5) $D_7 \oplus D_9 \oplus A_2$: $\displaystyle {s\atop s}\!>f-s-s-f-s$, $\displaystyle {s\atop s}\!>f-s-s-f-s-s-f$, $f-s$

$\displaystyle {{E_{11}'}\atop {E_{12}'}}\!>F_1- E_{13}'-E_{13}- G_3- E_{33}$

$\displaystyle {{E_{22}'}\atop {E_{23}'}}\!>F_2- E_{21}'-E_{21}- G_1- E_{31}- E_{31}'- F_3$

$G_2-E_{32}$

(6) $D_{10}\oplus D_6 \oplus A_2$: $\displaystyle {s\atop
s}\!>f-s-s-f-s-s-f-s-s-f-s$, $\displaystyle {s\atop s}\!>f-s-s-f$,
$f-s$

$\displaystyle {{E_{11}'}\atop {E_{12}'}}\!>F_1- E_{13}'-E_{13}- G_3- E_{33}- E_{33}'- F_3- E_{31}'$

$\displaystyle {{E_{22}'}\atop {E_{23}'}}\!>F_2-E_{21}'-E_{21}- G_1$

$G_2-E_{32}$}

\begin{center}
  (B) Indeterminate Cases
\end{center}

{\footnotesize
\begin{tabbing}
  \=\hspace{8mm}\=Case V:\qquad \=(2)\quad \=$D_9\oplus D_9$: $\displaystyle {s\atop s}\!>f-s-s-f-s-s-f$, $\displaystyle
  {s\atop s}\!>f-s-s-f-s-s-f$\\[2mm]
  \>\>Case IX: \>(1) \>$A_3\oplus D_6\oplus D_9$: $s-f-s$, $\displaystyle {s\atop s}\!>f-s-s-f$, $\displaystyle
  {s\atop s}\!>f-s-s-f-s-s-f$\\[2mm]
  \>\>Case IX: \>(2) \>$A_6\oplus D_6\oplus D_6$: $s-f-s-s-f-s$, $\displaystyle {s\atop s}\!> f-s-s-f$,
  $\displaystyle {s\atop s}\!>f-s-s-f$\\[2mm]
  \>\>Case XII:\>(1) \>$D_4\oplus D_4\oplus A_{10}$: $\displaystyle {s\atop s}\!>f-s$, $\displaystyle {s\atop s}\!>f-s$,
  $f-s-s-f-s-s-f-s-s-f$\\[2mm]
  \>\>Case XII:\>(3) \>$D_4\oplus D_{10}\oplus A_4$: $\displaystyle {s\atop s}\!>f-s$, $\displaystyle {s\atop s}\!>f-s-s-f-s-s-f-s$,
  $f-s-s-f$\\[2mm]
  \>\>Case XII:\>(4) \>$D_4\oplus D_{13}\oplus A_1$: $\displaystyle {s\atop s}\!>f-s$, $\displaystyle {s\atop s}\!>
  f-s-s-f-s-s-f-s-s-f-s$, $f$\\[2mm]
  \>\>Case XIII:\>(1) \>$D_4\oplus D_6\oplus A_8$: $\displaystyle {s\atop s}\!>f-s$, $\displaystyle {s\atop s}\!>f-s-s-f$,
  $f-s-s-f-s-s-f-s$\\[2mm]
  \>\>Case XIII:\>(2) \>$D_4\oplus D_9\oplus A_5$: $\displaystyle {s\atop
  s}\!>f-s$, $\displaystyle {s\atop s}\!>f-s-s-f-s-s-f$,
  $f-s-s-f-s$.
\end{tabbing}
}

\begin{table}[h]
  \caption{$I=2,4$}\label{table2.2}
\end{table}

We use the same labeling as in Figure~\ref{fig2.2}. For $I=2$,
``$f$'' denotes the $g$-fixed curve and $s$
denotes the $g$-stable but not $g$-fixed curve. For $I=4$, define $h=g^2$; ``$f$'' denotes the $g$-fixed curve, ``$h$'' denotes the $h$-fixed but not
$g$-fixed curve and ``$s$'' denotes the $g$-stable but not $h$-fixed
curve.

{\footnotesize (1) $A_1\oplus A_{17}$:

$I=2$: $f$, $f-s-f-s-f-s-f-s-f-s-f-s-f-s-f-s-f$

$I=4$: $h$, $h-s-f-s-h-s-f-s-h-s-f-s-h-s-f-s-h$

$H_{11}$

$H_{13}-E_{13}'-F_1-E_{12}-G_2-E_{32}-F_3-E_{33}'-
H_{33}-G_3-E_{23}'-F_2-E_{21}'-G_1-E_{31}-H_{31}$.

(2) $A_3\oplus A_{15}$:

$I=2$: $f-s-f$, $f-s-f-s-f-s-f-s-f-s-f-s-f-s-f$

$F_2-E_{22}-G_2$

$H_{11}-E_{11}'-F_1-E_{13}'-H_{13}- E_{13}- G_3- E_{33}- H_{33}- E_{33}'- F_3- E_{31}'- H_{31}- E_{31}- G_1$.

(3) $A_5\oplus A_{13}$:

$I=2$: $f-s-f-s-f$, $f-s-f-s-f-s-f-s-f-s-f-s-f$

$I=4$: $h-s-f-s-h$, $h-s-f-s-h-s-f-s-h-s-f-s-h$

$H_{13}-E_{13}-G_3-E_{33}-H_{33}$

$H_{11}-E_{11}'-F_1-E_{12}-G_2-E_{32}- F_3-E_{31}'- H_{31}- E_{31}- G_1-E_2'-F_2$

(4) $A_7\oplus A_{11}$:

$I=2$: $f-s-f-s-f-s-f$, $f-s-f-s-f-s-f-s-f-s-f$

$H_{13}-E_{13}-G_3-E_{33}-H_{33}-E_{33}'-F_3$

$H_{11}-E_{11}'-F_1 - E_{12}- G_2- E_{22} - F_2- E_{21}'- G_1- E_{31}- H_{31}$

(5) $A_9\oplus A_9$:

$I=2$: $f-s-f-s-f-s-f-s-f$, $f-s-f-s-f-s-f-s-f$

$I=4$: $h-s-f-s-h-s-f-s-h$, $h-s-f-s-h-s-f-s-h$)

$H_{11}-E_{11}'-F_1 - E_{12}- G_2- E_{32}- F_3- E_{33}'- H_{33}$

$H_{13}- E_{13}- G_3-E_{23}'- F_2- E_{21}'- G_1- E_{31}- H_{31}$

}

\bibliographystyle{amsplain}
\bibliography{Log}

\providecommand{\bysame}{\leavevmode\hbox to3em{\hrulefill}\thinspace}
\providecommand{\MR}{\relax\ifhmode\unskip\space\fi MR }
\providecommand{\MRhref}[2]{%
  \href{http://www.ams.org/mathscinet-getitem?mr=#1}{#2}
}
\providecommand{\href}[2]{#2}
\begin{thebibliography}{1}

\bibitem{atiyah1968segal}
M.F. Atiyah and G.B. Segal, \emph{{The index of elliptic operators II}}, Ann.
  of Math. (2) \textbf{87} (1968), 531--545.

\bibitem{atiyah1968singer}
M.F. Atiyah and I.M. Singer, \emph{{The index of elliptic operators III}}, Ann.
  of Math. (2) \textbf{87} (1968), 546--604.

\bibitem{oguiso1996most}
K.~Oguiso and D.-Q. Zhang, \emph{{On the most algebraic K3 surfaces and the
  most extremal log Enriques surfaces}}, Amer. J. Math. (1996), 1277--1297.

\bibitem{oguiso1998extremal}
\bysame, \emph{{On extremal log Enriques surfaces, II}}, Tohoku Math. J.
  \textbf{50} (1998), 419--436.

\bibitem{oguiso1999complete}
\bysame, \emph{{On the complete classification of extremal log Enriques
  surfaces}}, Math. Z. \textbf{231} (1999), no.~1, 23--50.

\bibitem{shioda-singular}
T.~Shioda and H.~Inose, \emph{{On singular K3 surfaces}}, Complex analysis and
  algebraic geometry, A collection of papers dedicated to K. Kodaira, Combrige
  Univ. Pr., 1977, pp.~119--136.

\bibitem{ueno1976remark}
K.~Ueno, \emph{{A remark on automorphisms of Enriques surfaces}}, J. Fac. Sci.
  Univ. Tokyo Sect. I A Math. \textbf{23} (1976), no.~1, 149--165.

\bibitem{zhang1991logarithmic}
D.-Q. Zhang, \emph{{Logarithmic Enriques surfaces}}, J. Math. Kyoto Univ.
  \textbf{31} (1991), no.~2, 419--466.

\bibitem{zhang1993logarithmic}
\bysame, \emph{{Logarithmic Enriques surfaces, II}}, J. Math. Kyoto Univ.
  \textbf{33} (1993), no.~2, 357--397.

\end{thebibliography}

\end{document}